\newtheorem{theorem}[subsubsection]{Theorem}
\newtheorem{lemma}[subsubsection]{Lemma}
\newtheorem{proposition}[subsubsection]{Proposition}
\newtheorem{conjecture}[subsubsection]{Conjecture}
\theoremstyle{definition}
\theoremstyle{remark}
\newtheorem{remark}[subsubsection]{Remark}
\DeclareMathAlphabet{\mathpzc}{OT1}{pzc}{m}{it}
 \newcommand{\FF}{{\mathbb F}}
 \newcommand{\ZZ}{{\mathbb Z}}
 \newcommand{\QQ}{{\mathbb Q}}
 \newcommand{\CC}{{\mathbb C}}
 \newcommand{\NN}{{\mathbb N}}
 \newcommand{\GG}{{\mathbb G}}
\newcommand{\EE}{{\mathbb E}}
\newcommand{\TT}{{\mathbb T}}
\newcommand{\LL}{{\mathbb L}}
\renewcommand{\a}{\alpha}   
\renewcommand{\d}{\delta}  \renewcommand{\th}{\theta}
  \renewcommand{\k}{\kappa}
\renewcommand{\l}{\lambda}  \renewcommand{\th}{\theta}
 \newcommand{\s}{\sigma} \renewcommand{\o}{\omega}
  \renewcommand{\t}{\tau}
\newcommand{\G}{\Gamma}  \renewcommand{\L}{\Lambda}
\DeclareMathOperator{\trdeg}{tr.deg} 
\DeclareMathOperator{\Ker}{Ker} 
\DeclareMathOperator{\GL}{GL} \DeclareMathOperator{\Mat}{Mat}
 \DeclareMathOperator{\End}{End}
\DeclareMathOperator{\Res}{Res} \DeclareMathOperator{\Ext}{Ext}
\begin{document}
\title[Periods of third kind and algebraic independence]{Periods of third kind for rank 2 Drinfeld modules  and algebraic independence
of logarithms}

%    Information for first author
\author[Chieh-Yu Chang]{Chieh-Yu Chang}
%    Address of record for the research reported here
\address{Mathematics Division, National Center for Theoretical Sciences,
National Tsing Hua University, Hsinchu City 30042, Taiwan
  R.O.C.}
\address{Department of Mathematics, National Central
  University, Chung-Li 32054, Taiwan R.O.C.}

 \email{cychang@math.cts.nthu.edu.tw}
%    \thanks will become a 1st page footnote.
%\thanks{}

\date{June 25, 2009}
%    \thanks will become a 1st page footnote.

\thanks{MSC: primary 11J93; secondary 11M38, 11G09}
\thanks{Key words: Algebraic independence; Drinfeld modules; periods; logarithms; $t$-motives}
\thanks{The author was supported by NCTS}

\begin{abstract}
In analogy with the periods of abelian integrals of differentials of
third kind for an elliptic curve defined over a number field, we
introduce a notion of periods of third kind for a rank $2$ Drinfeld
$\FF_{q}[t]$-module $\rho$ defined over an algebraic function field
and derive explicit formulae for them. When $\rho$ has complex multiplication by a separable extension, we prove the
algebraic independence of $\rho$-logarithms of algebraic points that
are linearly independent over the CM field of $\rho.$ Together with
the main result in \cite{CP08}, we completely determine all the
algebraic relations among the periods of first, second and third
kinds for rank $2$ Drinfeld $\FF_{q}[t]$-modules in odd
characteristic.

\end{abstract}

\maketitle
\section{Introduction}
\subsection{Motivation}
Let $E$ be an elliptic curve defined over $\overline{\QQ}$ given by
$y^{2}=4x^{3}-g_{2}x-g_{3}$. Let $\L=\ZZ \o_{1}+\ZZ \o_{2}$ be the
period lattice of $E$ comprising the periods of the holomorphic
differential form $ dx/ y $ (of the first kind) and let
$\wp(z),\zeta(z),\s(z)$ be the Weierstrass functions associated to
$\L$. Let $\eta:\L\rightarrow \CC$ be the quasi-period function
defined by $\eta(\o):=\zeta(z+\o)-\zeta(z)$. The quasi-periods
$\eta(\o)$, for $\o\in \L$, arise from the differential form $x dx /
y$ of the second kind (having poles with zero residues) and occur in
the second coordinate of period vectors of the exponential function
$(z_1,z_2)\mapsto (1,\wp (z_1),\wp'(z_1),z_2-\zeta (z_1) )$ for a
commutative algebraic group as an extension of $E$ by the additive
group $\GG_{a}$. In the 1930s, Schneider established a fundamental
theorem which asserts that nonzero periods and quasi-periods of $E$
are transcendental over $\QQ$. The periods and quasi-periods satisfy
the Legendre
relation: $$ {\rm{det}}\left(%
\begin{array}{cc}
  \o_{1} & \o_{2} \\
  \eta(\o_1) & \eta(\o_2) \\
\end{array}%
\right)=\pm 2 \pi \sqrt{-1}. $$ If the elliptic curve $E$ has
complex multiplication, then the transcendence degree of the field
$$ \overline{\QQ}(\o_1,\o_2,\eta(\o_1),\eta(\o_2)  )  $$
over $\overline{\QQ}$ is known to be $2$ by a theorem of Chudnovsky.
When $E$ has no complex multiplication, conjecturally one expects
that $\o_1,\o_2,\eta(\o_1),\eta(\o_2)$ are algebraically independent
over $\QQ$.

Given $u\in \CC$ so that $\wp(u)\in \overline{\QQ}$, we regard
$(1,\wp(u),\wp'(u))\in E^{\vee}\cong \Ext^{1}(E,\GG_{m} )$ and
 consider the differential form of the third kind (having poles with nonzero residues):
$$ \d=\frac{1}{2} \frac{y+ \wp'(u) }{x- \wp(u)  } \frac{dx}{y} .$$
For $\o\in \L$, we set $\l(\o,u):=\o \zeta(u)-\eta(\o)u$. If
$\gamma$ is any closed cycle on $E(\CC)$ along which $\d$ is
holomorphic, then the period of the integral of $\d$ along the cycle
takes the form
\begin{equation}\label{form of PThird intro}
 \l(\o,u)+ 2m \pi \sqrt{-1}
\end{equation}
for some $\o \in \L$ and $m\in \ZZ$. The values $\l(\o,u)+2m \pi
\sqrt{-1}$ as above occur in the second coordinate of  period
vectors  of the exponential function $$\left(z_1,z_2 \right)\mapsto
\left(1, \wp(z_1),\wp'(z_1),\frac{\s(z_1-u)  }{\s(z_1)\s(u) }
{\rm{exp}}(\zeta (u)z_1-z_2) \right) $$ for a commutative algebraic
group as an extension of $E$ by the multiplicative group $\GG_{m}$.
The transcendence of $\l(\o,u)+ 2m \pi \sqrt{-1}$ was established by
Laurent (cf. \cite{L80, L82}) if it is nonzero, and later on
W\"ustholz \cite{W84} extended Laurent's result to arbitrary
differentials of the third kind  over $\overline{\QQ}$. Given
nonzero $\o\in \L$ and $u_1,\ldots,u_n \in \CC$ with $\wp(u_i)\in
\overline{\QQ} $ for $1 \leq i\leq n$, the $\overline{\QQ}$-linear
independence of the following periods (of three different kinds)
$\left\{ \o_,\eta(\o),\l(\o,u_1),\ldots,\l(\o,u_n) \right\}$ was
achieved by W\"ustholz \cite{W84} if $\o,u_{1},\ldots,u_{n}$ are
linearly independent over $\QQ$ (see also \cite{BW07}). To determine
all the $\overline{\QQ}$-algebraic relations among these periods in
question, it suffices to prove the following conjecture:

\begin{conjecture}\label{conjecture}
Let $u_1,\ldots,u_n\in \CC$ satisfy $\wp(u_i)\in \overline{\QQ}$ for
$i=1,\ldots,n$. If $u_1,\ldots,u_{n}$ are linearly independent over
the endomorphism ring of $E$, then the $2n$ numbers
$$ u_1,\ldots,u_{n},\zeta(u_1),\ldots,\zeta(u_n)  $$
are algebraically independent over $\overline{\QQ}$.
\end{conjecture}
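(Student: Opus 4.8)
The plan is to reduce Conjecture~\ref{conjecture} to Grothendieck's period conjecture by an explicit motivic computation, and to harvest along the way the unconditional partial statements the reduction produces; the full statement itself is, in substance, an instance of that period conjecture and so lies beyond the current reach of transcendence theory over $\overline{\QQ}$. The first move is to recast the data. Because $\wp(u_i)\in\overline{\QQ}$, the point $P_i:=\bigl(\wp(u_i),\wp'(u_i)\bigr)$ lies in $E(\overline{\QQ})$ and $u_i$ is a branch of its logarithm under $\exp_E$; the companion number $\zeta(u_i)$ is --- once $u_i$ is fixed in $u_i+\L$ --- the value at $P_i$ of the quasi-logarithm attached to the second-kind differential $x\,dx/y$, i.e. a regularized $\int_{O}^{P_i}x\,dx/y$. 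Packaging both coordinates, $\bigl(u_i,\zeta(u_i)\bigr)$ is a logarithm, for the exponential of the universal vectorial extension $0\to\GG_a\to E^{\natural}\to E\to0$, of a $\overline{\QQ}$-rational point $\widetilde{P}_i\in E^{\natural}(\overline{\QQ})$ lying over $P_i$. Thus the $2n$ numbers in the conjecture are precisely the coordinates of logarithms of the $n$ algebraic points $\widetilde{P}_1,\dots,\widetilde{P}_n$ on the two-dimensional commutative algebraic group $E^{\natural}$; equivalently, they are the new (off weight-diagonal) entries of a de Rham--Betti period matrix of the $1$-motive $M:=\bigl[\,\ZZ^{\,n}\xrightarrow{\ i\,\mapsto\,\widetilde{P}_i\ }E^{\natural}\,\bigr]$ over $\overline{\QQ}$.

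The second and essentially unconditional step is to pin down the motivic (Tannakian) Galois group $G_M$ of $M$. Its reductive quotient is the motivic Galois group of the pure part --- the weight-zero $\ZZ^{\,n}$ together with $h^1(E)$ --- which for an elliptic curve is known (Deligne): it is $\GL_2$ when $E$ has no complex multiplication (hence of dimension $4$) and a two-dimensional torus when $E$ has complex multiplication (hence of dimension $2$). The unipotent radical $U$ of $G_M$ embeds, via the weight filtration $0\subset W_{-1}M=h_1(E^{\natural})\subset M$, into $\Hom\bigl(\mathrm{gr}^W_0M,\ \mathrm{gr}^W_{-1}M\bigr)$, a space of dimension $2n$; the hypothesis that $u_1,\dots,u_n$ are linearly independent over $\End(E)$ is exactly what forces the extension classes of $\widetilde{P}_1,\dots,\widetilde{P}_n$ in $\Ext^1\bigl(\ZZ,h_1(E^{\natural})\bigr)$ to be ``as independent as the $\End(E)$-action allows'', so that $U$ fills out its full dimension $2n$ (in the CM case one must also check that the $\End(E)$-bilinear constraints on $U$ do not lower this). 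Granting the bookkeeping, $\dim G_M=\dim(G_M/U)+2n$. Here W\"ustholz's analytic subgroup theorem \cite{W84} serves as a control: it already delivers the $\overline{\QQ}$-linear independence of $\{u_i,\zeta(u_i)\}$ and rules out a proper algebraic subgroup of $E^{\natural}$ absorbing the logarithms, consistent with the maximality of $U$.

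The third step invokes Grothendieck's period conjecture for $M$: the field generated over $\overline{\QQ}$ by all periods of $M$ has transcendence degree equal to $\dim G_M$. That field is $\overline{\QQ}\bigl(\o_1,\o_2,\eta(\o_1),\eta(\o_2),u_1,\dots,u_n,\zeta(u_1),\dots,\zeta(u_n)\bigr)$; bounding the transcendence degree of its subfield generated by $\o_1,\o_2,\eta(\o_1),\eta(\o_2)$ --- trivially by $4$, and by Chudnovsky's theorem by $2$ in the CM case, which in both cases matches $\dim(G_M/U)$ --- and applying the subadditivity of transcendence degree, one obtains $\trdeg_{\overline{\QQ}}\overline{\QQ}\bigl(u_1,\dots,u_n,\zeta(u_1),\dots,\zeta(u_n)\bigr)\ge \dim G_M-\dim(G_M/U)=2n$, hence exactly $2n$, which is the assertion of the conjecture.

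The decisive obstacle is the one just used, and it is not a technicality: Grothendieck's period conjecture --- even its weaker ``lower bound for transcendence degrees by motivic dimensions'' form --- is open in essentially every nontrivial instance over $\overline{\QQ}$, and already the case $n=1$ of Conjecture~\ref{conjecture}, the algebraic independence of a single pair $u,\zeta(u)$, is not known in general, only the separate transcendence of $u$ and of $\zeta(u)$ (Schneider) and their $\overline{\QQ}$-linear independence (W\"ustholz). The realistic yield of the route above is therefore: an unconditional proof of the $\overline{\QQ}$-linear independence (already due to W\"ustholz); an unconditional computation of $G_M$, giving Conjecture~\ref{conjecture} conditionally on Grothendieck; and, in the CM case, genuine but partial transcendence-degree lower bounds obtained by grafting Chudnovsky-type results such as $\trdeg_{\overline{\QQ}}\overline{\QQ}(\pi,\o_1,\eta(\o_1))=2$ onto algebraic-independence-measure techniques. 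Closing the gap to the full value $2n$ demands a substitute for Grothendieck's conjecture, and over number fields none is available. By contrast, over $\FF_q(t)$ such a substitute \emph{does} exist: Papanikolas's Tannakian theory of $t$-motives proves precisely the ``dimension of the motivic Galois group bounds the transcendence degree from below'' assertion, which is what renders the Drinfeld-module analogue of Conjecture~\ref{conjecture} --- established in the body of this paper, together with \cite{CP08} --- unconditional.
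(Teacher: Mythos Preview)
The paper does not prove Conjecture~\ref{conjecture}; it is stated purely as classical motivation and is explicitly flagged as open (the paper even notes that the weaker assertion, algebraic independence of $u_1,\ldots,u_n$ alone, is unproven). There is thus no ``paper's own proof'' to compare against. Your proposal correctly diagnoses this: you reduce the conjecture to Grothendieck's period conjecture for an appropriate $1$-motive, and you acknowledge that this reduction is conditional because Grothendieck's conjecture is open over~$\overline{\QQ}$.

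So your write-up is not a proof of the conjecture, and you know it. What you have actually produced is a heuristic explanation of \emph{why} the conjecture should hold and \emph{why} the function-field analogue in Theorem~\ref{thm 2 intro} is tractable: over $\FF_q(t)$, Papanikolas's theorem (Theorem~\ref{T:GalThy}) supplies exactly the ``transcendence degree equals dimension of the motivic Galois group'' statement that Grothendieck's conjecture would supply over~$\overline{\QQ}$. This is precisely the philosophy behind the paper's main results, and your final paragraph articulates it well. The unconditional content you extract --- W\"ustholz's linear independence, Chudnovsky's CM result, and the computation of $\dim G_M$ --- is accurate but does not close the gap to $2n$, as you say.

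One small caution: your sketch of the computation of the unipotent radical $U$ in the classical setting (``the hypothesis\ldots forces $U$ to fill out its full dimension $2n$'') hides real work even at the motivic level; the analogous step in the paper is Theorem~\ref{K independence in Ext}, which requires a genuine argument about poles of solutions to difference equations. In the classical case one would need the corresponding statement about independence of the extension classes in the category of mixed Hodge structures (or mixed motives), and while this is plausible it is not automatic from the linear independence of the~$u_i$.
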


But even the weaker conjecture that the values $u_{1},\ldots,u_{n}$
are algebraically independent over $\overline{\QQ}$ is still open.

\subsection{The main results}
The purpose of the paper are two themes. The first is to introduce
periods of third kind for rank $2$ Drinfeld modules and develop
explicit formulae for them. The second is to prove the analogue of
Conjecture \ref{conjecture} in the setting of rank $2$ Drinfeld
modules with complex multiplication by separable extensions. (Note that a rank $2$ Drinfeld module
with complex multiplication by an inseparable extension exists only in characteristic $2$).

Let $\FF_q$ be the finite field of $q$ elements, where $q$ is a
power of a prime $p$. Let $k:=\FF_q(\theta)$ be the rational
function field in a variable $\theta$ over $\FF_q$. Let
$\CC_{\infty}$ be the completion of an algebraic closure of
$\FF_q((\frac{1}{\theta})) $ with respect to the non-archimedean
absolute value of $k$ for which $|\theta|_{\infty}=q$. Let
$\t:\CC_{\infty}\rightarrow \CC_{\infty}$ be the Frobenius operator
($x\mapsto x^{q}$) and let $\CC_{\infty}[\tau]$ be the twisted
polynomial ring in $\tau$ over $\CC_{\infty}$ subject to the
relation $\tau c=c^{q} \tau$ for $c\in \CC_{\infty}$.

A Drinfeld $\FF_{q}[t]$-module (of generic characteristic) is an
$\FF_q$-linear ring homomorphism $\rho: \FF_{q}[t]\rightarrow
\CC_{\infty}[\t]$ so that the coefficient of $\t^{0}$ in $\rho_{t}$
is $\th$ and  $\rho_{t}\notin \CC_{\infty}$. The degree of $\rho_t$
in $\t$ is called the rank of $\rho$. Moreover, the exponential
function of $\rho$ is defined to be the unique power series of the
form $\exp_{\rho}(z)=z+\sum_{i=1}^{\infty}\a_i z^{q^{i}}$, with $\a_i \in
\CC_{\infty}$, satisfying the functional equation $\exp_{\rho}(\th
z)=\rho_{t}(\exp_{\rho}(z)  ) $. One can show that $\exp_{\rho}$ is
an entire function on $\CC_{\infty}$ and its kernel $\L_{\rho}:=\Ker
\exp_{\rho}$ is a discrete, $\FF_{q}[\th]$-module of rank $r$ inside
$\CC_{\infty}$. This $\L_{\rho}$ is called the period lattice of
$\rho$ and elements of $\L_{\rho}$ are called periods (of first
kind) of $\rho$. Finally, we say that $\rho$ is defined over
$\bar{k}$ if the coefficients of $\rho_t$ lie in $\bar{k}$.

The ring $\End(\rho):= \left\{\a\in \CC_{\infty};\hbox{ }\a
\L_{\rho}\subseteq \L_{\rho} \right\}$ is called the multiplication
ring of $\L_{\rho}$ and it can be identified with the endomorphism
ring of $\rho$ (cf. \cite{G96, R02, T04}).  Given two Drinfeld
$\FF_{q}[t]$-modules $\rho$ and $\nu$, we say that $\rho$ is
isomorphic to $\nu$ if there exists $\epsilon\in
\CC_{\infty}^{\times}$ for which $\nu_{t}=\epsilon^{-1}{\rho}_{t}
\epsilon$.

Fix a rank $2$ Drinfeld $\FF_{q}[t]$-module $\rho$  given by
$\rho_{t}=\th+\kappa \t+ \Delta \t^{2}$ with $\kappa, \Delta\in
\bar{k}$, $\Delta\neq 0$. Let
$\L_{\rho}=\FF_{q}[\th]\o_{1}+\FF_{q}[\th]\o_{2}$ be the period
lattice of $\rho$. When
$\End(\rho)\supsetneq \FF_{q}[\th]$, we say that $\rho$ has complex
multiplication and the fraction field of $\End(\rho)$ is called the
CM field of $\rho$. If $\End(\rho)=\FF_{q}[\th]$, then we say that
$\rho$ has no complex multiplication.

 Let $F_{\tau}$ be the quasi-periodic function
of $\rho$ associated to $\tau$, i.e., it is the unique power series
satisfying the conditions:
\begin{enumerate}
\item[$\bullet$] $F_{\t}(z)\equiv 0 \hbox{ }({\rm{mod}}\hbox{ }z^{q}   );$
\item[$\bullet$] $F_{\t}(\th z)-\th F_{\t}(z)=\exp_{\rho}(z)^{q}$.
\end{enumerate}
The function $F_{\t}$ has the following properties:
\begin{enumerate}
\item[$\bullet$] $F_{\t}$ is entire on $\CC_{\infty}$;
\item[$\bullet$] $F_{\t}(z+\o)=F_{\t}(z)+F_{\t}(\o)$ for $\o\in \L_{\rho}$;
\item[$\bullet$] ${F_{\t}}\mid_{\L_{\rho}}: \L_{\rho}\rightarrow \CC_{\infty}$ is $\FF_{q}[\th]$-linear.
\end{enumerate}
The values $F_{\tau}(\o)$ for $\o\in \L_{\rho}$, are called
quasi-periods of $\rho$ associated to $\tau$ (or periods of second
kind for $\rho$).

For each $\varphi=\sum_{i}a_{i}\t^{i} \in \Mat_d(\CC_{\infty}[\t])$
with $a_{i}\in \Mat_{d}(\CC_{\infty})$, we put $\partial
\varphi:=a_{0}$. A $t$-module of dimension $d$ is an
$\FF_{q}$-linear ring homomorphism
$$\phi:\FF_{q}[t]\rightarrow \Mat_{d}(\CC_{\infty}[\t])
$$ so that $\partial \phi_{t}-\th I_{d}$ is a nilpotent matrix. Its exponential function, denoted by $\exp_{\phi}$, is
defined to be the unique entire function as $\FF_q$-linear
homomorphism from $\CC_{\infty}^{d}$ to
 $\CC_{\infty}^{d}$, satisfying the conditions:
\begin{enumerate}
\item[$\bullet$] $\exp_{\phi}(\mathbf{z})\equiv \mathbf{z}$ (mod deg $q$);
\item[$\bullet$] $\exp_{\phi}(\partial \phi_{t}
(\mathbf{z}))=\phi_{t}\left(\exp_{\phi}(\mathbf{z}) \right)$.
\end{enumerate}
We say that $\phi$ is uniformizable if $\exp_{\phi}$ is surjective
onto $\CC_{\infty}^{d}$ (cf. \cite{A86}).

Now, we turn to the quasi-periods of $\rho$. In fact, for $\o\in
\L_{\rho}$ the vector $\left(\o, -F_{\t}(\o)\right)^{tr}$ is a
period vector of $\exp_{\phi}$ for the two dimensional $t$-module
$\phi$ defined by $$\phi_{t}=\left(
                                                                                \begin{array}{cc}
                                                                                  \rho_{t} & 0 \\
                                                                                  \t & \th \\
                                                                                \end{array}
                                                                              \right),
$$ which is an extension of $\rho$ by the additive group $\GG_{a}$
(cf. \cite{Ge89, Yu90, BP02}).

In \cite{Yu86} and \cite{Yu90}, Yu established fundamental results
parallel to the work of Schneider. That is, nonzero periods and
quasi-periods of $\rho$ are transcendental over $k$. Anderson
proved an analogue of the Legendre relation:
$$ \o_{1}F_{\t}(\o_{2})-\o_{2}F_{\t}(\o_{1})  =\tilde{\pi}/ \sqrt[q-1]{-\Delta},$$ where $\tilde{\pi}$ is a fundamental
period of the Carlitz module $C$, which is the rank one Drinfeld $\FF_{q}[t]$-module defined by $C_{t}=\th+\t$, and $\sqrt[q-1]{-\Delta}$ is a choice of $(q-1)$st root of $-\Delta$, which is fixed throughout this paper. If
$\rho$ has complex multiplication, then the transcendence degree of
the field $$\bar{k}(\o_1,\o_2,F_{\t}(\o_1),F_{\t}(\o_2))$$ over
$\bar{k}$ is known to be $2$ by  Thiery \cite{Thi92}. When $p$ is
odd and $\rho$ has no complex multiplication, the author and
Papanikolas \cite{CP08} have proved the algebraic independence of
$\o_1,\o_2,F_{\t}(\o_1),F_{\t}(\o_2)$ over $\bar{k}$.

Consider the category of two-dimensional uniformizable $t$-modules
which are extensions of $\rho$ by the Carlitz module $C$ and whose
Lie algebras are split (see \cite{PR03} and $\S 2$). We denote the
category by $\Ext^{1}_{0}(\rho,C)$ and note that it is in bijection
with $\left\{ \a \tau; \hbox{ }\a\in \CC_{\infty} \right\}$.
Moreover, $\Ext^{1}_{0}(\rho,C)$ has a $t$-module structure
isomorphic to the rank $2$ Drinfeld $\FF_{q}[t]$-module $\rho$.
Fixing any $\a\in \bar{k}^{\times}$, we consider $\a \tau \in
\Ext^{1}_{0}(\rho,C)$ that corresponds to the uniformizable
$t$-module $\phi$ defined by
\begin{equation}\label{phi intro}
\phi_t= \left(%
\begin{array}{cc}
 \rho_{t}  & 0 \\
  \a \t & C_{t} \\
\end{array}%
\right).
\end{equation}
Note that any period vector in $\hbox{Ker exp}_{\phi}$
can be written in the form $\left(%
\begin{array}{c}
  \o \\
  \l \\
\end{array}%
\right)$  for some $\o\in \L_{\rho}$ and $\l \in \CC_{\infty}$ (cf.
$\S$\ref{subsec for 3rd kind}). We call such $\l$ a period of third
kind for $\rho$ (associated to $\a \tau$). The first main theorem of the present paper is to
derive an explicit formula for $\l$ as an analogue of (\ref{form of
PThird intro}):

\begin{theorem}\label{thm 1 intro}
Given $\a\in \bar{k}^{\times}$, let $\phi$ be the $t$-module defined
in {\rm{(\ref{phi intro})}}.  Let $u\in \CC_{\infty}$ satisfy
${\rm{exp}}_{\rho}(u)= \a /\sqrt[q-1]{-\Delta}   $. For any period
vector $\left(%
\begin{array}{c}
  \o \\
  \l \\
\end{array}%
\right)\in {\rm{Ker}}\hbox{ }{\rm{exp}}_{\phi}$, there exists $f\in
\FF_{q}[t]$ so that
$$ \l=- \sqrt[q-1]{-\Delta}  \left( u F_{\t}(\o) - \o F_{\t}(u)  \right)+f(\th) \tilde{\pi}  .$$

\end{theorem}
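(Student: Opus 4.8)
\emph{Reduction to a single identity.}
The plan is to first compute $\exp_\phi$ explicitly. Writing $\exp_\phi\bigl((z_1,z_2)^{tr}\bigr)=(A(z_1,z_2),B(z_1,z_2))^{tr}$ and using that $\exp_\phi$ is $\FF_q$-linear, each of $A,B$ splits as a power series in $z_1$ plus one in $z_2$; substituting into $\exp_\phi(\th\mathbf z)=\phi_t(\exp_\phi(\mathbf z))$ and matching against $\exp_\phi(\mathbf z)\equiv\mathbf z\pmod{\deg q}$ forces, by the usual coefficient comparison, $A=\exp_\rho(z_1)$, the $z_2$-part of $B$ to equal $\exp_C$, and the $z_1$-part of $B$ to be the unique entire $\FF_q$-linear series $G$ with $G\equiv0\pmod{z^q}$ and
\begin{equation*}
G(\th z)-\th G(z)-G(z)^q=\a\,\exp_\rho(z)^q .
\end{equation*}
Thus $(\o,\l)^{tr}\in\Ker\exp_\phi$ exactly when $\o\in\L_\rho$ and $\exp_C(\l)=-G(\o)$; since $\exp_C$ is surjective with kernel $\FF_q[\th]\tilde\pi=\{f(\th)\tilde\pi:f\in\FF_q[t]\}$, the theorem is equivalent to the single identity
\begin{equation*}
G(\o)=\exp_C\!\bigl(\mathfrak{d}\,(uF_\t(\o)-\o F_\t(u))\bigr)\quad\text{for every }\o\in\L_\rho,\qquad\mathfrak{d}:=\sqrt[q-1]{-\Delta}. \tag{$\ast$}
\end{equation*}

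\emph{Anderson generating functions.}
To prove $(\ast)$ I would introduce $s_x(t):=\sum_{j\ge0}\exp_\rho(x/\th^{j+1})t^j$, $\mathfrak{g}_c(t):=\sum_{j\ge0}\exp_C(c/\th^{j+1})t^j$, $\tilde G_\o(t):=\sum_{j\ge0}G(\o/\th^{j+1})t^j$, and write $h^{(i)}$ for the $i$-fold Frobenius twist $\sum c_jt^j\mapsto\sum c_j^{q^i}t^j$. Collecting terms over a common denominator gives closed forms $s_x(t)=\sum_{n\ge0}\a_nx^{q^n}/(\th^{q^n}-t)$ (with $\exp_\rho=\sum\a_nz^{q^n}$), and similarly for $\mathfrak{g}_c$ and $\tilde G_\o$; these are meromorphic on $\CC_\infty$ with poles only at $t=\th^{q^n}$, whence $\Res_{t=\th}s_x=-x$, $s_x^{(1)}(\th)=F_\t(x)$, $\Res_{t=\th}\mathfrak{g}_c=-c$, while $\tilde G_\o$ is regular at $t=\th$ because $G$ has no linear term. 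Running the standard generating-function manipulation on the functional equations of $\exp_\rho$, $\exp_C$ and, applied to $(\o,0)^{tr}$ and read in the second coordinate, of $\exp_\phi$, yields
\begin{equation*}
\Delta s_x^{(2)}+\kappa s_x^{(1)}+(\th-t)s_x=\exp_\rho(x),\qquad \mathfrak{g}_c^{(1)}+(\th-t)\mathfrak{g}_c=\exp_C(c),
\end{equation*}
\begin{equation*}
\a\,s_\o^{(1)}+\tilde G_\o^{(1)}+(\th-t)\tilde G_\o=G(\o)\qquad(\o\in\L_\rho).
\end{equation*}

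\emph{Assembling $(\ast)$.}
Fix $\o\in\L_\rho$ and a logarithm $u$ with $\exp_\rho(u)=\a/\mathfrak{d}$. Using the two second-order relations for $s_\o$ (right-hand side $0$) and $s_u$ (right-hand side $\a/\mathfrak{d}$), a short computation shows that the Casoratian $W:=s_us_\o^{(1)}-s_\o s_u^{(1)}$ satisfies $\Delta W^{(1)}=(\th-t)W-(\a/\mathfrak{d})s_\o^{(1)}$, so that $E:=\mathfrak{d}\,W$ satisfies $E^{(1)}+(\th-t)E=\a\,s_\o^{(1)}$ --- here the identity $\mathfrak{d}^{q-1}=-\Delta$ is exactly what turns the $\Delta$-twisted equation into a Carlitz-type one --- and a residue computation at $t=\th$ gives $\Res_{t=\th}W=-(uF_\t(\o)-\o F_\t(u))$. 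Adding $E$ to the third relation above, $H:=\tilde G_\o+E$ satisfies $H^{(1)}+(\th-t)H=G(\o)$ (a constant) with $\Res_{t=\th}H=-\mathfrak{d}\,(uF_\t(\o)-\o F_\t(u))=:-c$. Hence $H$ and $\mathfrak{g}_c$ solve the same Carlitz-type difference equation up to the constants $G(\o)$ and $\exp_C(c)$ and have the same simple principal part at $t=\th$, so $K:=H-\mathfrak{g}_c$ is regular at $t=\th$ and $K^{(1)}=(t-\th)K+\gamma$ with $\gamma:=G(\o)-\exp_C(c)$. Iterating this last equation --- each Frobenius twist enlarges the radius of convergence, and regularity at $t=\th$ removes the apparent singularity there --- shows $K$ extends to an entire function; but for an entire solution of $K^{(1)}=(t-\th)K+\gamma$ the coefficients obey $C_t(k_j)=k_{j-1}$ for $j\ge1$ and $C_t(k_0)=\gamma$, which makes $|k_j|$ decay only geometrically when $\gamma\ne0$, contradicting entireness. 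Therefore $\gamma=0$, i.e.\ $G(\o)=\exp_C(c)=\exp_C\bigl(\mathfrak{d}(uF_\t(\o)-\o F_\t(u))\bigr)$, which is $(\ast)$.

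\emph{The main obstacle.}
The delicate point is the rigid-analytic bookkeeping in the last step: one must pin down the exact pole locations (the $t=\th^{q^n}$) and orders of $s_u$, $s_\o$, $\tilde G_\o$ and the Casoratian $W$ precisely enough to know that the simple poles at $t=\th$ cancel in $H-\mathfrak{g}_c$ and that the resulting function is genuinely entire rather than merely analytic on a disc, and one must keep the normalization $\mathfrak{d}=\sqrt[q-1]{-\Delta}$ coherent throughout, since it is exactly this $(q-1)$st root that mediates between the $\rho$-side quantities $\o,F_\t(\o),u,F_\t(u)$ and the Carlitz period $\tilde\pi$ entering through $\mathfrak{g}_c$ --- the same mechanism underlying Anderson's Legendre relation $\o_1F_\t(\o_2)-\o_2F_\t(\o_1)=\tilde\pi/\mathfrak{d}$. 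Alternatively, one can avoid these estimates by assembling $s_u$, $s_\o$, $\tilde G_\o$ and $\mathfrak{g}_c$ into a rigid analytic trivialization of the dual $t$-motive attached to $\phi$ and reading $\l$ off the specialization of the trivialization matrix at $t=\th$, as in \cite{CP08}; the block-triangular shape of $\phi$ then forces $\l$ into the asserted form.
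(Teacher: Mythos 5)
Your proposal is correct, and it proves the theorem by a genuinely different route than the paper. The paper first reduces to $\Delta=1$ by an isomorphism of Drinfeld modules, assembles two $3\times 3$ Frobenius difference systems $\Psi^{(-1)}=\Phi\Psi$ and $\mathbf{\Psi}^{(-1)}=\mathbf{\Phi}\mathbf{\Psi}$ (built from $\Omega$, the Anderson generating functions of the period vectors of $\phi$, and $f_u$), links them by a constant matrix $A$ satisfying $A^{(-1)}\Phi=\mathbf{\Phi}A$, uses that $\sigma$-fixed elements of $\GL_{3}(\TT)$ lie in $\GL_3(\FF_q[t])$ to get $A\Psi=\mathbf{\Psi}\gamma$ with $\gamma\in\GL_{3}(\FF_{q}[t])$, and reads the formula off at $t=\th$; the polynomial $f$ comes from the entries of $\gamma$. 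You instead use the explicit shape of $\exp_{\phi}$ (which the paper simply quotes from \cite{PR03} as (\ref{exp phi})) to describe $\Ker\exp_{\phi}$ and reduce everything to the single scalar identity $G(\o)=\exp_{C}\bigl(\sqrt[q-1]{-\Delta}\,(uF_{\t}(\o)-\o F_{\t}(u))\bigr)$, which you establish by a Casoratian computation on scalar Anderson generating functions (handling general $\Delta$ directly via $(\sqrt[q-1]{-\Delta})^{q-1}=-\Delta$ instead of twisting to $\Delta=1$), a residue comparison at $t=\th$, and the nonexistence of entire solutions of $K^{(1)}=(t-\th)K+\gamma$ with $\gamma\neq 0$; here the $f(\th)\tilde{\pi}$-ambiguity comes from $\Ker\exp_{C}=\FF_{q}[\th]\tilde{\pi}$ rather than from $\gamma\in\GL_{3}(\FF_{q}[t])$. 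I checked your difference equations, the relation $E^{(1)}+(\th-t)E=\a s_{\o}^{(1)}$, and the residues: all correct. The two points you leave light are easily closed and are not gaps: since the poles of $K^{(1)}$ are exactly the $q$-th powers of the poles of $K$, the equation propagates regularity at $\th$ to every $\th^{q^{m}}$, so $K$ is entire; and if some coefficient $k_{j}$ vanished then $C_{t}(k_{j})=k_{j-1}$ would force $k_{0}=0$ and hence $\gamma=C_{t}(k_{0})=0$, so under $\gamma\neq 0$ the decay is exactly $|k_{j}|=|k_{j_{0}}|q^{-(j-j_{0})}$, contradicting entireness. The trade-off: your argument is more elementary and self-contained (no matrix rigid analytic trivializations, no $\Omega$, no reduction to $\Delta=1$) and isolates a clean identity showing precisely where the Carlitz period enters, while the paper's version packages the same computation into trivialization matrices of the kind reused later for the $t$-motives and motivic Galois groups.
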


According to the formula in Theorem \ref{thm 1 intro}, to determine
all the algebraic relations among the periods of three different
kinds for $\rho$ is equivalent to prove the analogue of Conjecture
\ref{conjecture}. When $p$ is odd and $\rho$ has no complex
multiplication, the author and Papanikolas \cite[Thm. 1.2.4]{CP08}
have proved the algebraic independence of
$u_1,\ldots,u_n,F_{\t}(u_1),\ldots,F_{\t}(u_n)$ if $u_1,\ldots,u_n$
are linearly independent over $k$, where $u_1,\ldots,u_n \in
\CC_{\infty}$ satisfy $\hbox{exp}_{\rho}(u_i)\in \bar{k}$ for
$i=1,\ldots,n$. The second main theorem in this paper is to
establish the counterpart:
\begin{theorem}\label{thm 2 intro}
Let $\rho$ be a rank $2$ Drinfeld
$\FF_{q}[t]$-module with complex multiplication defined over
$\bar{k}$ and suppose that its CM field is separable over $k$. Let $u_{1},\ldots,u_{n}\in \CC_{\infty}$ satisfy
${\rm{exp}}_{\rho}(u_{i})\in \bar{k}$ for $i=1,\ldots,n$. If
$u_{1},\ldots,u_{n}$ are linearly independent over
${\rm{End}}(\rho)$, then the $2n$ quantities
$$u_{1},\ldots,u_{n},F_{\t}(u_{1}),\ldots,F_{\t}(u_{n}) $$
are algebraically independent over $\bar{k}$.
\end{theorem}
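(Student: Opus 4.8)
The plan is to run the argument through the theory of $t$-motives and Papanikolas's theory relating the transcendence degree of periods to the dimension of the associated Tannakian Galois group, exactly as in \cite{CP08}, but now exploiting the CM hypothesis to cut down the Galois group on the nose. First I would attach to each $u_i$ the rank-one $t$-motive $X_i$ coming from the two-dimensional $t$-module $\phi^{(i)}$ of the form \eqref{phi intro} with $\a=\a_i:=\sqrt[q-1]{-\Delta}\exp_{\rho}(u_i)\in\bar k^{\times}$ (this is legitimate since $\exp_{\rho}(u_i)\in\bar k$), so that by Theorem \ref{thm 1 intro} the period matrix of $X_i$ records, up to the Carlitz period $\tilde\pi$ and $\bar k$-linear combinations, precisely the pair $\bigl(u_i,F_{\t}(u_i)\bigr)$. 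Let $M$ be the direct sum of the $t$-motive $M_{\rho}$ attached to $\rho$ (whose periods are $\o_1,\o_2,F_{\t}(\o_1),F_{\t}(\o_2)$, all algebraic over the CM period by Thiery, or rather generate a field of transcendence degree $2$), the Carlitz motive $C$, and all the $X_i$. By Papanikolas's main theorem, $\operatorname{tr.deg}_{\bar k}\bar k(\text{periods of }M)=\dim\Gamma_M$, where $\Gamma_M$ is the Tannakian Galois group of the $t$-motive generated by $M$. Since the $2n$ quantities in question, together with $\tilde\pi$ and the period lattice of $\rho$, all lie in the period field of $M$ and (via Legendre, Theorem \ref{thm 1 intro}, and the CM relations) the transcendence degree of the full period field exceeds that of $\bar k(u_1,\dots,u_n,F_{\t}(u_1),\dots,F_{\t}(u_n))$ by a controlled amount, it suffices to compute $\dim\Gamma_M$ and show it is large enough — concretely, one wants $\dim\Gamma_M=2n+2$ (the $+2$ absorbing $\tilde\pi$ and the CM period of $\rho$).

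The heart of the matter is the structure of $\Gamma_M$. Let $K$ be the CM field of $\rho$; since $K/k$ is separable, $\operatorname{End}(\rho)\otimes_{\FF_q[\th]}K(\th)$ acts on $M_{\rho}$ and forces the Galois group $\Gamma_{M_{\rho}}$ of the Drinfeld motive to be (contained in) the norm-one torus $\operatorname{Res}_{K/k}\GG_m$ of dimension $1$ — indeed by Thiery's theorem it is exactly this one-dimensional torus. The extension classes $X_i$, as in the classical situation, give rise to a unipotent radical: $\Gamma_M$ sits in an exact sequence
\[
1\longrightarrow V\longrightarrow \Gamma_M\longrightarrow \Gamma_{M_{\rho}}\times\Gamma_C\longrightarrow 1,
\]
where $\Gamma_C=\GG_m$ and $V\subseteq \GG_a^{\,n}$ is a vector group on which the torus $\Gamma_{M_{\rho}}$ acts through the two-dimensional standard representation (twisted appropriately), i.e. $V$ is naturally a submodule of $\bigoplus_{i=1}^n W$ where $W$ is the standard $2$-dimensional representation of $\operatorname{Res}_{K/k}\GG_m$. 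The claim is that $V$ is everything, $V=\bigoplus_{i=1}^n W$, so $\dim V=2n$ and $\dim\Gamma_M=2n+1+1=2n+2$. To prove $V$ is full one argues by contradiction: a proper sub-$\Gamma_{M_{\rho}}$-module of $\bigoplus W$ would, by the Tannakian dictionary and a Kummer-theory/biextension argument (as in \cite[\S3]{CP08} or in the work on linear independence of logarithms), produce a nontrivial $\operatorname{End}(\rho)$-linear relation $\sum c_i\,(u_i,F_{\t}(u_i))\equiv$ (period of $\rho$) modulo the Carlitz motive — but then a descent, using that $W$ is irreducible over $K$ precisely because $K/k$ is \emph{separable} (in the inseparable case $W$ would have an invariant line and the argument breaks, which is why characteristic $2$ CM is excluded), shows $u_1,\dots,u_n$ are linearly dependent over $\operatorname{End}(\rho)$, contradicting the hypothesis.

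The main obstacle, and the place where real work is needed beyond citing \cite{CP08}, is establishing that $V$ has no proper nonzero $\Gamma_{M_{\rho}}$-stable complement-free submodule other than via an honest $\operatorname{End}(\rho)$-linear dependence — in other words, ruling out "sporadic" sub-representations that do not come from relations among the $u_i$. In the non-CM case of \cite{CP08} the torus was a full two-dimensional $\GG_m^2$ (or $\GL_2$-type) and the representation-theoretic bookkeeping was different; here one must check that the $K$-module structure on $W^{\oplus n}$ is rigid enough that every stable submodule is of the form $\bigoplus_i (K\text{-span of a relation})$, which reduces to a statement about submodules of $K^n$ as a module over $K[\Gamma]$ — clean when $K/k$ is separable, and this is exactly the input the separability hypothesis supplies. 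I expect the remaining steps (the explicit $t$-motive construction from \eqref{phi intro}, the rigid-analytic trivialization giving the period matrix matching Theorem \ref{thm 1 intro}, the identification $\Gamma_{M_{\rho}}=\operatorname{Res}_{K/k}^{(1)}\GG_m$, and the final transcendence-degree count) to be routine adaptations of the now-standard machinery of Anderson, Papanikolas, and \cite{CP08}.
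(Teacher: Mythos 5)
Your overall strategy (attach $t$-motives to the $u_i$, invoke Papanikolas' theorem, show the reductive quotient is the CM torus and that the unipotent radical is full) is the same as the paper's, but there are two problems. First, a concrete error: you assert that $\G_{M_{\rho}}$ is the \emph{norm-one} torus of dimension $1$, "exactly this one-dimensional torus by Thiery's theorem." Thiery's theorem gives $\trdeg_{\bar{k}}\bar{k}(\o_1,\o_2,F_{\t}(\o_1),F_{\t}(\o_2))=2$, which forces $\dim \G_{M_{\rho}}=2$; the paper (Lemma \ref{rest of scalars}) identifies it with the full restriction of scalars $\Res_{\mathcal{K}/\FF_{q}(t)}(\GG_m)$, which is $2$-dimensional. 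Your final count $2n+2$ only comes out right because you adjoin a separate Carlitz factor $\G_C$, whereas in fact the Carlitz motive is (up to an algebraic twist) the determinant of $M_{\rho}$ and is already accounted for inside $\G_{M_{\rho}}$; the claimed quotient $\G_{M_{\rho}}\times\G_C$ with a $1$-dimensional $\G_{M_{\rho}}$ is structurally wrong even if isogenous to the right torus. (Also, your $X_i$ built from \eqref{phi intro} are rank $3$, not rank one, and recovering $u_i,F_{\t}(u_i)$ from their periods requires both $\l(\o_1,u_i)$ and $\l(\o_2,u_i)$ together with the Legendre relation -- workable, but not what you wrote.)

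Second, and more seriously, the central technical step is missing: you yourself flag as "the main obstacle" the claim that every proper $\G_{M_{\rho}}$-stable submodule of the unipotent part must come from an honest $\End(\rho)$-linear relation among the $u_i$, and you offer only a sketch ("Kummer-theory/biextension argument", "a descent"). This is exactly where the paper does its real work: (i) Proposition \ref{prop for F ij}, giving regularity of endomorphism matrices at $t=\th,\th^q,\ldots$, $F_{21}(\th)=0$ and $F_{11}(\th)\in K$, proved by specializing the difference equation $F\Psi_{\rho}=\Psi_{\rho}\eta$ and using the algebraic independence of $\{\tilde{\pi},\o_1\}$; (ii) Theorem \ref{K independence in Ext}, the $\mathcal{K}$-linear independence of the extension classes in $\Ext^1_{\mathcal{T}}(\mathbf{1},M_{\rho})$, proved by an explicit specialization-at-$t=\th$ computation together with a pole-propagation argument showing $a(\th)=A(\th)$; and (iii) the Hardouin-style argument (properties (I)--(V) in \S4.4), which uses complete reducibility of $\G_{M_{\rho}}$-modules -- this is precisely where separability of the CM field enters, since it makes $\G_{M_{\rho}}$ a torus (in the inseparable case the group acquires a nontrivial unipotent radical and the argument breaks, cf.\ Remark \ref{rmk final subsec}) -- to convert a deficient unipotent radical into a trivial pushout extension, contradicting (ii). None of these steps is supplied in your proposal, so as it stands the argument has a genuine gap at its core rather than being a routine adaptation of \cite{CP08}.
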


The omitted case of the rank $2$ Drinfeld modules with complex multiplication by inseparable extensions
imposes some added difficulties, which require further investigation (see Remark \ref{rmk final subsec}). Combining Theorem \ref{thm 2 intro} and \cite[Thm. 1.2.4]{CP08} we
obtain the algebraic independence of periods of three different
kinds in odd characteristic:
\begin{theorem}\label{thm 3 intro}
Let $p$ be odd and let $\rho$ be a rank $2$ Drinfeld
$\FF_{q}[t]$-module defined over $\bar{k}$. Let
$u_{1},\ldots,u_{n}\in \CC_{\infty}$ satisfy
${\rm{exp}}_{\rho}(u_{i})\in \bar{k}$ for $i=1,\ldots,n$. Given a
nonzero period $\o\in \L_{\rho}$, we set $\l(\o,u_{i}):=\o
F_{\t}(u_{i})-u_{i} F_{\t}(\o)$ for $i=1,\ldots,n$. If $ \o,
u_{1},\ldots,u_{n}$ are linearly independent over
${\rm{End}}(\rho)$, then the $2+n$ quantities
$$  \o, F_{\t}(\o),\l(\o,u_{1}),\ldots,\l(\o,u_{n}) $$
are algebraically independent over $\bar{k}$.

\end{theorem}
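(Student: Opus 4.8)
The plan is to derive Theorem~\ref{thm 3 intro} as a purely formal consequence of Theorem~\ref{thm 2 intro} and \cite[Thm.~1.2.4]{CP08}, by treating the period $\o$ on the same footing as an algebraic point of $\rho$ and then reading off the values $\l(\o,u_i)$ from a transcendence-degree computation. First I would record the dichotomy available when $p$ is odd: a rank~$2$ Drinfeld $\FF_q[t]$-module $\rho$ over $\bar k$ either satisfies $\End(\rho)=\FF_q[\th]$, or it has complex multiplication by a field that is automatically separable over $k$, because an inseparable CM extension can occur only in characteristic~$2$. In the first case \cite[Thm.~1.2.4]{CP08} is applicable (and over $\End(\rho)=\FF_q[\th]$, linear independence coincides with linear independence over $k$), and in the second Theorem~\ref{thm 2 intro} is applicable. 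Next I would apply the relevant theorem to the enlarged tuple $(v_0,v_1,\dots,v_n):=(\o,u_1,\dots,u_n)$: here $\exp_{\rho}(v_i)\in\bar k$ for every $i$ since $\exp_{\rho}(\o)=0$, and the hypothesis that $\o,u_1,\dots,u_n$ are linearly independent over $\End(\rho)$ is exactly the required independence of $v_0,\dots,v_n$ over $\End(\rho)$. This yields, in either case, the algebraic independence over $\bar k$ of the $2n+2$ quantities $\o,u_1,\dots,u_n,F_{\t}(\o),F_{\t}(u_1),\dots,F_{\t}(u_n)$.

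With that in hand I would do the transcendence-degree bookkeeping. Put $K:=\bar k(\o,F_{\t}(\o))$; since $\o$ and $F_{\t}(\o)$ are two of the $2n+2$ algebraically independent numbers just produced, $\trdeg_{\bar k}K=2$ and the remaining $2n$ of them, namely $u_1,\dots,u_n,F_{\t}(u_1),\dots,F_{\t}(u_n)$, are algebraically independent over $K$. Hence the $K$-algebra homomorphism $K[X_1,\dots,X_n,Y_1,\dots,Y_n]\to\CC_\infty$ sending $X_i\mapsto u_i$ and $Y_i\mapsto F_{\t}(u_i)$ is injective and extends to an isomorphism of $K(X_1,\dots,X_n,Y_1,\dots,Y_n)$ onto $K\bigl(u_1,\dots,u_n,F_{\t}(u_1),\dots,F_{\t}(u_n)\bigr)$, under which $\o Y_i-F_{\t}(\o)X_i$ corresponds to $\l(\o,u_i)=\o F_{\t}(u_i)-u_i F_{\t}(\o)$. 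Because $\o\neq 0$, one has $Y_i=\o^{-1}\bigl((\o Y_i-F_{\t}(\o)X_i)+F_{\t}(\o)X_i\bigr)$, so $X_1,\dots,X_n,\,\o Y_1-F_{\t}(\o)X_1,\dots,\o Y_n-F_{\t}(\o)X_n$ generate $K(X_1,\dots,X_n,Y_1,\dots,Y_n)$ over $K$ and therefore, being $2n$ generators of a field of transcendence degree $2n$ over $K$, form a transcendence basis; in particular $\l(\o,u_1),\dots,\l(\o,u_n)$ are algebraically independent over $K$. Adding transcendence degrees along $\bar k\subseteq K\subseteq K\bigl(\l(\o,u_1),\dots,\l(\o,u_n)\bigr)$ gives $\trdeg_{\bar k}\bar k\bigl(\o,F_{\t}(\o),\l(\o,u_1),\dots,\l(\o,u_n)\bigr)=n+2$, which is exactly the asserted algebraic independence.

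There is no analytic or $t$-motivic obstacle in this deduction---the entire substance lies in Theorem~\ref{thm 2 intro} and \cite[Thm.~1.2.4]{CP08}---so the only genuine points needing care are the three routine ones above: that the case split into ``no CM'' and ``CM with separable CM field'' is exhaustive in odd characteristic (which relies on the fact that inseparable CM forces $p=2$), that the $\End(\rho)$-linear independence hypothesis passes to the enlarged tuple $(\o,u_1,\dots,u_n)$, and the elementary observation that linearly independent linear forms in a polynomial ring over $K$ are algebraically independent over $K$. The real limitation---rather than an obstacle inside the proof---is that this argument genuinely breaks in characteristic~$2$, precisely because the inseparable-CM analogue of Theorem~\ref{thm 2 intro} is not yet available; this is the point flagged in Remark~\ref{rmk final subsec}.
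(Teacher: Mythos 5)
Your proposal is correct, and it is leaner than the paper's own argument, although both rest on exactly the same two inputs: Theorem \ref{thm 2 intro} (equivalently Theorem \ref{main thm sec 4}) in the CM case and \cite[Thm.~1.2.4]{CP08} in the non-CM case. The difference lies in how the hypotheses are fed into those theorems. The paper works with the field $L$ generated over $\bar{k}$ by \emph{both} fundamental periods $\o_1,\o_2$, the logarithms $u_1,\dots,u_n$, and all the associated quasi-periods; it computes $\trdeg_{\bar{k}}L$ in terms of the dimension $r$ of the $K$-span of $\{\o_1,\o_2,u_1,\dots,u_n\}$, and is then forced into a case analysis (CM; no CM with $r=n+2$; no CM with $r=n+1$), the last case requiring an explicit manipulation with the Legendre relation (\ref{E:Legendre}) and the quasi-period difference equation to exhibit a transcendence basis of $L$ containing the $\l(\o,u_i)$. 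You bypass all of that by applying the black-box theorems directly to the tuple $(\o,u_1,\dots,u_n)$, which is legitimate since $\exp_{\rho}(\o)=0\in\bar{k}$ and the linear-independence hypothesis of Theorem \ref{thm 3 intro} is precisely what those theorems demand (over $\End(\rho)=\FF_q[\th]$ independence over $\FF_q[\th]$ and over $k$ coincide, and in odd characteristic a CM field is automatically separable, as the paper itself notes); you then observe that over $\bar{k}(\o,F_{\t}(\o))$ the passage from $(u_i,F_{\t}(u_i))$ to $(u_i,\l(\o,u_i))$ is an invertible linear substitution, so the transcendence-degree count is immediate. What the paper's longer route buys is explicit transcendence bases of the full field generated by the periods, quasi-periods and logarithms, including the delicate situation where $\o_1,\o_2,u_1,\dots,u_n$ satisfy a $k$-linear relation even though $\o,u_1,\dots,u_n$ do not, which fits its broader goal of determining all algebraic relations among periods of the three kinds; your route shows that for the bare statement of Theorem \ref{thm 3 intro} none of that case analysis, nor the Legendre relation, is actually needed.
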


\subsection{Methods and outline of this paper}
In order to derive the explicit formula in Theorem \ref{thm 1
intro}, we consider the Anderson generating function of a period
vector for the $t$-module $\phi$ given in (\ref{phi intro}). The
function satisfies a difference equation. The subtle part here is to
prove that such a difference equation is related to a difference
equation associated to the $\rho$-logarithm of an algebraic point
(i.e., its image under $\exp_{\rho}$ lies in $\bar{k}$), whence we
obtain Theorem \ref{thm 1 intro}.

To prove Theorem \ref{thm 2 intro}, it suffices to prove the
algebraic independence of
$$\o_1,u_1,\ldots,u_n,F_{\t}(\o_1),F_{\t}(u_1),\ldots,F_{\t}(u_n) $$
in the situation that $\Delta=1$ and $\o_1,u_1,\ldots,u_n$ are
linearly independent over $\hbox{End}(\rho)$. To prove it, we shall
apply the fundamental theorem of Papanikolas \cite{P08}, which is a
function field analogue of Grothendieck's periods conjecture for
abelian varieties defined over $\overline{\QQ}$. More precisely, for each $1\leq i\leq n$, we construct a suitable $t$-motive
$M_{i}$, which is an extension of the trivial $t$-motive by the $t$-motive $M_{\rho}$ associated to $\rho$, so that
its period matrix contains the periods, quasi-periods of $\rho$, $u_{i}$ and $F_{\t}(u_{i})$. Here a $t$-motive is a dual notion of a $t$-module introduced by Anderson \cite{A86}. We further define the $t$-motive $M$
as the direct sum $\oplus_{i=1}^{n}M_{i}$. Then we are reduced to proving that the dimension of the motivic Galois
group $\G_{M}$ of $M$ is equal to $2+2n$.

The first step of the proof is to show that the motivic Galois group
$\G_{M_{\rho}}$ of the $t$-motive $M_{\rho}$ is
isomorphic to the restriction of scalars of $\GG_{m}$ from the CM
field of $\rho$. It follows that $\G_{M}$  is an extension of the torus $\G_{M_{\rho}}$ by its unipotent radical. Next, under the hypothesis of linear independence of the quantities in question, we prove the $\rm{End}_{\mathcal{T}}(M_{\rho})$-linear independence of the $t$-motives $M_{1},\ldots,M_{n}$  in $\rm{Ext}^{1}_{\mathcal{T}}(\mathbf{1},M_{\rho})$ (see Theorem \ref{K independence in Ext}), where $\mathcal{T}$ is the category of $t$-motives which is a neutral Tannakian category over $\FF_{q}(t)$ developed by Papanikolas. Having this nice property at hand, we follow Hardouin (\cite[Thm. 4.7]{Pe07}, \cite[Cor. 2.4]{H09}) to prove that if the dimension of the unipotent radical of $\G_{M}$ is less than $2n$, then $M_{1},\ldots,M_{n}$ are linearly dependent over $\rm{End}(M_{\rho})$ in $\rm{Ext}^{1}_{\mathcal{T}}(\mathbf{1},M_{\rho})$, which is a contradiction and hence we prove Theorem \ref{thm 2 intro}.

The present paper is organized as follows. In $\S 2$, we review
Papanikolas' theory and prove that $\G_{M_{\rho}}$ is a non-split
torus over $\FF_{q}(t)$ under the  hypothesis on $\rho$. In $\S 3$, we review the
work of Papanikolas-Ramachandran about the relationship between
$(\rho,C)$-biderivations and $\Ext^{1}_{0}(\rho,C)$, and further
prove Theorem \ref{thm 1 intro}. Finally, we prove Theorem \ref{thm 2 intro} and derive Theorem \ref{thm 3 intro} in
$\S4$.

\section{Periods and motivic Galois groups}

\subsection{Notation and definitions} \label{notation}

\subsubsection{Table of symbols.}${}$
\\$\mathbb{F}_{q}:=$ the finite field of $q$ elements, $q$ is a power
of a prime number $p$.
\\$\th,t:=$ independent variables.
\\$\mathbb{F}_{q}[\theta]:=$ the polynomial ring in the variable
$\theta$ over $\mathbb{F}_{q}.$
\\$k:=\mathbb{F}_{q}(\theta)$, the fraction field of $\FF_{q}[\th]$.
\\$k_{\infty}:=\mathbb{F}_{q}((\frac{1}{\theta}))$, the completion of
$k$ with respect to the place at infinity.
\\$\overline{k_{\infty}}:=$ a fixed algebraic closure of
$k_{\infty}$.
\\$\overline{k}:=$ the algebraic closure of $k$ in
$\overline{k_{\infty}}$.
\\$\mathbb{C}_{\infty}:=$ the completion of $\overline{k_{\infty}}$
with respect to canonical extension of the place at infinity.
\\$|\cdot|_{\infty}:=$ a fixed absolute value for the completed field
$\mathbb{C}_{\infty}$ so that $|\th|_{\infty}=q$.
\\$\mathbb{T}:=\{f\in \mathbb{C}_{\infty}[[t]]$; f${}$ converges${}$
on${}$ $|t|_{\infty}\leq_{}1 \}$, the Tate algebra.
\\$\mathbb{L}:=$ the fraction field of $\mathbb{T}.$
%\\${\GG}_{a}:=$ the additive group.
\\$\hbox{GL}_r:=$ group of invertible rank $r$  matrices.
%\\${\GG}_{m}:=\hbox{GL}_{1}$, the multiplicative group.
%\\$A^{\vee}:=(A^{-1})^{tr},$ for $A\in \GL_{r}(\LL)$.
%\\$I_{n}:=$ the identity matrix of size $n$.
\subsubsection{Frobenius twisting and entire power series}\label{sec Frob
Twist}

For $n\in \ZZ$, given a Laurent series $f = \sum_{i}a_{i}t^{i} \in
\CC_{\infty}((t))$ we define the $n$-fold twist of $f$ by the rule
$\s^{-n}f:= f^{(n)}:=\sum_{i}a_{i}^{q^{n}}t^{i}$.  For each $n$, the
twisting operation is an automorphism of the Laurent series field
$\CC_{\infty}((t))$ stabilizing several subrings, e.g.,
$\bar{k}[[t]]$, $\bar{k}[t]$ and $\TT$. More generally, for any
matrix $B$ with entries in $\CC_{\infty}((t))$ we define $B^{(n)}$
by the rule ${B^{(n)}}_{ij}:=B_{ij}^{(n)}$.

A power series $f = \sum_{i=0}^{\infty} a_{i}t^{i}\in
\CC_{\infty}[[t]]$ that satisfies
\[
\lim_{i \rightarrow \infty} \sqrt[i]{|a_{i}|_{\infty} }=0 \quad
\textnormal{and}\quad
[k_{\infty}(a_{0},a_{1},a_{2},\cdots):k_{\infty} ]< \infty
\]
is called an entire power series. As a function of $t$, such a power
series $f$ converges on all of $\CC_{\infty}$ and, when restricted
to $\overline{k_{\infty}}$, $f$ takes values in
$\overline{k_{\infty}}$. The ring of entire power series is denoted
by $\EE$.

\subsection{Review of Papanikolas' theory} In this subsection we follow \cite{P08} for
background and terminology of $t$-motives.  Let
$\bar{k}(t)[\sigma,\sigma^{-1}]$ be the noncommutative ring of
Laurent polynomials in $\sigma$ with coefficients in $\bar{k}(t)$,
subject to the relation
\[
\sigma f=f^{(-1)}\sigma, \quad \forall\,f \in \bar{k}(t).
\]
Let $\bar{k}[t,\s]$ be the noncommutative subring of
$\bar{k}(t)[\s,\s^{-1}]$ generated by $t$ and $\s$ over $\bar{k}$.
An Anderson $t$-motive is a left $\bar{k}[t,\s]$-module
$\mathcal{M}$ which is free and finitely generated both as a left
$\bar{k}[t]$-module and a left $\bar{k}[\s]$-module and which
satisfies, for integers $N$ sufficiently large,
\begin{equation}\label{cond for Anderson t-motives}
(t-\th)^{N}\mathcal{M} \subseteq \s \mathcal{M}.
\end{equation}
Given an Anderson $t$-motive $\mathcal{M}$ of rank $r$ over
$\bar{k}[t]$, let $\mathbf{m}\in \Mat_{r\times 1}(\mathcal{M})$
comprise a $\bar{k}[t]$-basis of $\mathcal{M}$. Then multiplication
by $\s$ on $\mathcal{M}$ is represented by $\s \mathbf{m}=\Phi
\mathbf{m}$ for some $\Phi\in \Mat_{r}(\bar{k}[t])$. Note that
$(\ref{cond for Anderson t-motives})$ implies $\det
\Phi=c(t-\th)^{s}$ for some $c\in \bar{k}^{\times}$ and $s\geq 0$.

We say that a left $\bar{k}(t)[\s,\s^{-1}]$-module $P$ is a
pre-$t$-motive if it is finite dimensional over $\bar{k}(t)$. Let
$\mathcal{P}$ be the category of pre-$t$-motives. Morphisms in
$\mathcal{P}$ are left $\bar{k}(t)[\s,\s^{-1}]$-module homomorphisms
and hence $\mathcal{P}$ forms an abelian category. The identity
object of $\mathcal{P}$ is denoted by $\mathbf{1}$ whose underlying
space is $\bar{k}(t)$ together with the $\s$-action given by
$$ \s f=f^{(-1)}\hbox{ for }f\in \mathbf{1}    .$$

In what follows we are interested in pre-$t$-motives $P$ that are rigid analytically trivial. To define this,
we let $\mathbf{p}\in {\rm{Mat}}_{r\times 1}(P)$ comprise
a $\bar{k}(t)$-basis of $P$. Then multiplication by $\s$ on $P$ is
given by $\s \mathbf{p}=\Phi \mathbf{p}$ for some $\Phi\in
\GL_{r}(\bar{k}(t)) $. Now, $P$ is said to be rigid analytically trivial if there
exists $\Psi\in \GL_{r}(\LL)$ so that $\Psi^{(-1)}=\Phi \Psi $. The existence of such matrix $\Psi$
is equivalent to the isomorphism of the natural map of $\LL$-vector spaces, $\LL \otimes_{\FF_{q}(t)}P^{B}\rightarrow P^{\dag}$, where
\begin{enumerate}
\item[$\bullet$] $P^{\dag}:=\LL \otimes_{\bar{k}(t)}P$, on which $\s$ acts diagonally;

\item[$\bullet$] $P^{B}:=$the $\FF_{q}(t)$-submodule of $P^{\dag}$ fixed by $\s$, the \lq\lq Betti\rq \rq  cohomology of $P$.
\end{enumerate}
We then call that
$\Psi$ is a rigid analytic trivialization for $\Phi$ and in this situation $\Psi^{-1}\mathbf{p}$ is an $\FF_{q}(t)$-basis of $P^{B}$. Note that if $\Psi'$ is also a rigid analytic trivialization for $\Phi$, then one has
\begin{equation}\label{E:Uniq of Psi}
\Psi'^{-1}\Psi\in \GL_{r}(\FF_{q}(t)).
\end{equation}

Given an Anderson $t$-motive $\mathcal{M}$, we obtain a
pre-$t$-motive $M$ by setting
$M:=\bar{k}(t)\otimes_{\bar{k}[t]}\mathcal{M}$ with the $\s$-action
given by $\s(f\otimes m ):=f^{(-1)}\otimes \s m $ for $f\in
\bar{k}(t)$, $m\in \mathcal{M}$. Rigid analytically trivial
pre-$t$-motives that can be constructed from Anderson $t$-motives
using direct sums, subquotients, tensor products, duals and internal
Hom's, are called $t$-motives. These $t$-motives form a neutral
Tannakian category $\mathcal{T}$ over $\FF_{q}(t)$ with fiber functor $M\mapsto M^{B}$.

 For any $t$-motive $M$, let
$\mathcal{T}_{M}$ be the strictly full Tannakian subcategory of
$\mathcal{T}$ generated by $M$, then by Tannakian duality
$\mathcal{T}_{M}$ is equivalent to the category of
finite-dimensional representations over $\FF_{q}(t)$ of an affine
algebraic group scheme $\G_{M}$ defined over $\FF_{q}(t)$. The
$\G_{M}$ is called the motivic Galois group of $M$ and we always have a faithful representation
$$ \varphi:\G_{M}\hookrightarrow \GL(M^{B}). $$
Using difference Galois theory, $\G_{M}$ can be constructed explicitly  as follows.

Let $\Phi\in\GL_{r}(\bar{k}(t))$ represent multiplication by $\s$ on
$M$ and let $\Psi\in {\rm{GL}}_{r}(\LL)$ be a rigid analytic
trivialization for $\Phi$.  Now set $\Psi_{1}$, $\Psi_{2}\in
{\rm{GL}}_{r}(\LL\otimes_{\bar{k}(t)} \LL)$ to be the matrices such
that $(\Psi_{1})_{ij}=\Psi_{ij}\otimes 1$ and
$(\Psi_{2})_{ij}=1\otimes \Psi_{ij}$, and let
$\widetilde{\Psi}:=\Psi_{1}^{-1}\Psi_{2}\in
{\rm{GL}}_{r}(\LL\otimes_{\bar{k}(t)}\LL )$. We define an
$\FF_q(t)$-algebra homomorphism $\mu_{\Psi}: \FF_q(t)[X,1/\det X]
\rightarrow \LL\otimes_{\bar{k}(t)}\LL$ by setting $\mu(X_{ij}) =
\widetilde{\Psi}_{ij}$, where $X=(X_{ij})$ is an $r\times r$ matrix of independent variables.  We let $\Delta_{\Psi}:={\rm{Im}}\hbox{ }
\mu_{\Psi}$ and set
\begin{equation}\label{E:GammaDef}
\Gamma_{\Psi} := {\rm{Spec}}\hbox{ }\Delta_{\Psi}.
\end{equation}
Thus $\Gamma_{\Psi}$ is the smallest closed subscheme of
${\rm{GL}}_{r/\FF_q(t)}$ such that $\widetilde{\Psi} \in
\Gamma_{\Psi}(\LL\otimes_{\bar{k}(t)}\LL)$. We collect the results
developed by Papanikolas as the following.
\begin{theorem}[Papanikolas {\cite{P08}}]
\label{T:GalThy} Let $M$ be a $t$-motive  and let $\Phi\in
{\rm{GL}}_{r}(\bar{k}(t))$ represent multiplication by $\sigma$ on
$M$ and let $\Psi\in {\rm{GL}}_{r}(\LL)$ be a rigid analytic
trivialization for $\Phi$. Then $\G_{\Psi}$ has the following
properties:
\begin{enumerate}
\item $\Gamma_{\Psi}$ is a closed $\FF_q(t)$-subgroup scheme of
${\rm{GL}}_{r/\FF_q(t)}$.
\item $\Gamma_{\Psi}$ is absolutely irreducible and smooth over
$\overline{\FF_{q}(t)}$.
%\item[(c)] $Z_{\Psi}$ is stable under right-multiplication by
%$\Gamma_{\Psi}\times_{\FF_q(t)}\bar{k}(t)$ and is a torsor for
%$\Gamma_{\Psi}\times_{\FF_q(t)}\bar{k}(t)$ over $\bar{k}(t)$.
%\item[(d)] ${\rm{dim}}\hbox{ } \Gamma_{\Psi}= {\rm{tr.deg}}_{\bar{k}(t)} \Lambda_{\Psi}$.
\item $\Gamma_{\Psi}$ is isomorphic to $\Gamma_{M}$ over $\FF_q(t)$.
\item If $\Phi\in \Mat_{r}(\bar{k}[t])\cap \GL_{r}(\bar{k}(t))$ and
${\rm{det}}\Phi=c(t-\th)^{s}$ for some $c\in \bar{k}^{\times}$, then
there exists $\Psi\in \Mat_{r}(\EE)\cap \GL_{r}(\TT)$ so that
$\Psi^{(-1)}=\Phi \Psi$ and one has
$$ {\rm{dim}}\hbox{ }\G_{\Psi}= \trdeg_{\bar{k}}\bar{k}(\Psi(\th)),$$
where $\bar{k}(\Psi(\th))$ is the field generated by all entries of
$\Psi(\th)$ over $\bar{k}$.
\end{enumerate}
\end{theorem}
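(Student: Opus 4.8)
The plan is to quote Papanikolas \cite{P08}, where all four assertions are proved; but let me indicate the shape of the argument. Parts (1)--(3) form the difference-Galois and Tannakian core. The starting point is that inside $\LL\otimes_{\bar k(t)}\LL$ the two copies of $\bar k(t)$ are identified, so $\Phi$ has a single image there and the relation $\Psi^{(-1)}=\Phi\Psi$ forces $\s\widetilde{\Psi}=\widetilde{\Psi}$; hence $\widetilde{\Psi}$ lies in the ring of $\s$-invariants and $\Delta_{\Psi}$ is a genuine $\FF_q(t)$-subalgebra. The next step is to equip $\Delta_{\Psi}$ with a Hopf structure: the counit is induced by the multiplication map $\LL\otimes_{\bar k(t)}\LL\to\LL$ (which sends $\widetilde{\Psi}$ to $I_r$), the antipode by $X\mapsto X^{-1}$, and the comultiplication by the cocycle identity $\widetilde{\Psi}_{13}=\widetilde{\Psi}_{12}\widetilde{\Psi}_{23}$ obtained by comparing the three evident images of $\Psi$ in the triple tensor product $\LL\otimes_{\bar k(t)}\LL\otimes_{\bar k(t)}\LL$; this gives (1). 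For (2) one observes that $\Gamma_{\Psi}$ acts simply transitively on a nonempty affine variety --- the \lq\lq period torsor\rq\rq\ cut out by $\Psi$ inside $\GL_{r/\LL}$ --- so smoothness follows from homogeneity and absolute irreducibility from irreducibility of that torsor, the positive-characteristic subtlety (reducedness of the Galois group scheme) being built into the inversive difference-Galois formalism. Part (3), the comparison $\Gamma_{\Psi}\cong\Gamma_M$, amounts to checking that $N\mapsto N^{B}$ is a fibre functor on $\mathcal{T}_M$ and that the resulting torsor of comparison isomorphisms between fibre functors is represented, via $\widetilde{\Psi}$, by $\Gamma_{\Psi}$ --- which forces $\Gamma_M\cong\Gamma_{\Psi}$ over $\FF_q(t)$.

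For (4) one first produces the entire trivialisation: since $\Phi\in\Mat_r(\bar k[t])\cap\GL_r(\bar k(t))$ with $\det\Phi=c(t-\th)^{s}$, the equation $\Psi^{(-1)}=\Phi\Psi$ is solved by Anderson's successive-approximation procedure, the entries landing in $\EE$ and $\det\Psi$ being forced to be a unit in $\TT$, so that $\Psi(\th)$ is defined. The bound $\trdeg_{\bar k}\bar k(\Psi(\th))\le\dim\Gamma_{\Psi}$ is then immediate by specialising the defining ideal of $\Gamma_{\Psi}$ at $t=\th$. The content is the reverse inequality: one must show that every polynomial relation over $\bar k$ among the entries of $\Psi(\th)$ is the specialisation of an element of $\Delta_{\Psi}$. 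Here the Anderson--Brownawell--Papanikolas linear independence criterion enters --- a $\bar k$-linear dependence at $t=\th$ among entries of such a $\Psi$ is already a $\bar k[t]$-linear dependence of functions --- and it must be bootstrapped, using the group structure from (1)--(3), into injectivity of the specialisation map on the relevant graded pieces of the coordinate rings, which yields $\trdeg_{\bar k}\bar k(\Psi(\th))=\dim\Gamma_{\Psi}$.

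The main obstacle is precisely this last step: the hard analytic input (the ABP criterion, resting on Anderson's theory of entire functions and a delicate study of the $\s$-difference equation near $t=\th$) has to be meshed with the Tannakian machinery so as to identify the full $\bar k$-algebra of relations among the period coordinates with the specialisation of the Hopf ideal of $\Gamma_{\Psi}$. By contrast, parts (1)--(3) are formal once one has the inversive difference-Galois theory and the Tannakian formalism of \cite{P08} in hand; the only genuine care there is the smoothness and reducedness of $\Gamma_{\Psi}$ in characteristic $p$.
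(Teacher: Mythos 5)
Your proposal takes the same route as the paper, which offers no proof of this theorem at all but simply quotes it from Papanikolas \cite{P08}; citing that work is exactly what is intended here, and your sketch of its internal structure (the $\sigma$-invariance of $\widetilde{\Psi}$, the Hopf/torsor formalism for (1)--(3), and the ABP criterion driving the equality in (4)) is a faithful summary of how the cited proof goes. No gap to report.
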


\begin{remark}\label{rmk for Galois rep} By Theorem \ref{T:GalThy}, we always identify
$\G_{M}$ with $\G_{\Psi}$ in this paper. Using the formalism of Frobenius difference equations, the
faithful representation $\varphi$ can be described as follows. For any $\FF_{q}(t)$-algebra $R$, the map
$\varphi:\G_{\Psi}(R)\rightarrow \GL(R\otimes_{\FF_{q}(t)}M^{B})$ is given by
$$ \gamma\mapsto \left( 1\otimes\Psi^{-1}\mathbf{m}\mapsto (\gamma^{-1}\otimes 1)\cdot (1\otimes\Psi^{-1}\mathbf{m}   )    \right) .$$

\end{remark}
\subsection{Motivic Galois groups of CM Drinfeld modules of rank
2}\label{sec for rank 2 Drin mod} We fix a rank $2$ Drinfeld
$\FF_{q}[t]$-module $\rho$ given by $\rho_{t}=\th+\kappa \t+\t^{2}$,
with $\kappa\in \bar{k}$.  Let
$\Lambda_{\rho}:=\FF_{q}[\th]\omega_{1}+\FF_{q}[\th]\omega_{2}$ be
the period lattice of $\rho$. In this subsection, we suppose that
$\rho$ has complex multiplication, i.e., the fraction field of $
{\rm{End}}(\rho):= \left\{ x\in \CC_{\infty}; \hbox{
}x\Lambda_{\rho}\subseteq \Lambda_{\rho} \right\}$
 is a quadratic field extension of  $k$. Let
\begin{equation}\label{def of Phi rho}
\Phi_{\rho}:=\left(
                         \begin{array}{cc}
                           0 & 1 \\
                           (t-\th) & -\kappa^{(-1)} \\
                         \end{array}
                       \right)\in \Mat_{2}(\bar{k}[t])\cap \GL_{2}(\bar{k}(t))
\end{equation}
define a pre-$t$-motive $M_{\rho}$, i.e., with respect to a fixed
$\bar{k}(t)$-basis $\mathbf{m}$ of the two dimensional $\bar{k}(t)$-vector space
$M_{\rho}$,
 multiplication by $\s$
on $M_{\rho}$ is given by $\Phi_{\rho}$. In fact, $M_{\rho}$ is a $t$-motive and we
review the construction of a rigid analytic trivialization for
$\Phi_{\rho}$ as follows (cf. \cite[$\S 2.5$]{CP08} and \cite[$\S
4.2$]{Pe07}).

Let $\exp_{\rho}(z):=\sum_{i=0}^{\infty}\alpha_{i}z^{q^{i}}$ be
the exponential function of $\rho$. Given $u\in \CC_{\infty}$ we
consider the Anderson generating function
\begin{equation}\label{E:AndGF}
f_{u}(t):=\sum_{i=0}^{\infty} \exp_{\rho} \biggl(
\frac{u}{\theta^{i+1}} \biggr)t^{i} =
\sum_{i=0}^{\infty}\frac{\alpha_{i} u^{q^{i}} }{ \theta^{q^{i}}-t }
\in \TT
\end{equation}
and note that $f_{u}(t)$ is a meromorphic function on
$\CC_{\infty}$.  It has simple poles at $\theta$, $\theta^{q},
\ldots$ with residues $-u$, $-\alpha_{1} u^{q}, \ldots$
respectively.  Using $\rho_{t}(\exp_{\rho} (\frac{u}{\theta^{i+1}}))
= \exp_{\rho}(\frac{u}{\theta^{i}})$, we have
\begin{equation}\label{E:fu1}
\kappa f_{u}^{(1)}+f_{u}^{(2)} =(t-\theta)f_{u} + \exp_{\rho}(u).
\end{equation}
Since $f_{u}^{(m)}(t)$ converges away from $\{
\theta^{q^{m}},\theta^{q^{m+1}},\ldots \}$ and
$\Res_{t=\theta}f_{u}(t)=-u$, we have
\begin{equation}\label{E:fu2}
\kappa f_{u}^{(1)}(\theta)+f_{u}^{(2)}(\theta) =-u+\exp_{\rho}(u)
\end{equation}
by specializing \eqref{E:fu1} at $t=\theta$. Moreover, using the
difference equation $F_{\t}(\th z)-\th
F_{\t}(z)=\hbox{exp}_{\rho}(z)^{q}$ one has
\begin{equation}\label{f u (1) theta}
f_{u}^{(1)}(\th)=F_{\t}(u).
\end{equation}

 Recall the analogue
of the Legendre relation proved by Anderson,
\begin{equation}\label{E:Legendre}
\omega_{1} F_{\tau}(\omega_{2})- \omega_{2}
F_{\tau}(\omega_{1})=\tilde{\pi}/ \xi,
\end{equation}
where $\xi\in \overline{\FF_q}^{\times}$ satisfies $\xi^{(-1)}=-\xi$
and $\tilde{\pi}$ is a generator of the period lattice of the
Carlitz module $C$.  We pick a suitable choice
$(-\theta)^{\frac{1}{q-1}}$ of the $(q-1)$-st root of $-\theta$ so
that $\Omega(\theta)=\frac{-1}{\tilde{\pi}}$, where
\[
\Omega(t):=(-\theta)^{\frac{-q}{q-1}} \prod_{i=1}^{\infty} \biggl(
1-\frac{t}{\theta^{q^{i}}} \biggr) \in \EE,
\]
(see \cite[Cor.~5.1.4]{ABP04}). Note that $\Omega$ satisfies the difference equation $\Omega^{(-1)}=(t-\th)\Omega$.   Now put $f_{i}:=f_{\omega_{i}}(t)$
for $i=1,2$, and define
\begin{equation}\label{def of Psi rho}
\Psi_{\rho}:=\xi \Omega \left(
                                \begin{array}{cc}
                               -f_{2}^{(1)}    &  f_{1}^{(1)} \\
                                  \kappa f_{2}^{(1)}+f_{2}^{(2)}   &  -\kappa f_{1}^{(1)}-f_{1}^{(2)} \\
                                \end{array}
                              \right).
\end{equation}
Then $\Psi_{\rho}\in \hbox{GL}_{2}(\TT)\cap \hbox{Mat}_{2}(\mathbb{E})$ and
by \eqref{E:fu1} we have $\Psi_{\rho}^{(-1)} = \Phi_{\rho}\Psi_{\rho}$. By specializing
$\Psi_{\rho}$ at $t=\theta$ and using the analogue of the Legendre
relation, we obtain
\begin{equation}\label{Psi th}
 \left(\Psi_{\rho}^{-1}\right)^{\rm{tr}}(\th)=\left(%
\begin{array}{cc}
  \o_{1} &\o_2  \\
   -F_{\t}(\o_1) & -F_{\t}(\o_{2}) \\
\end{array}%
\right)  .
\end{equation}
Therefore by Theorem~\ref{T:GalThy} we have the following
equivalence
$$
 {\rm{dim}}\hbox{ } \Gamma_{\Psi_{\rho}}=2 \Leftrightarrow
\trdeg_{\bar{k}} \bar{k}( \omega_{1}, \omega_{2},
F_{\tau}(\omega_{1}), F_{\tau}(\omega_{2}))=2.
$$
As $\rho$ has complex multiplication, Thiery \cite{Thi92} has shown
that $$\trdeg_{\bar{k}} \bar{k}( \omega_{1}, \omega_{2},
F_{\tau}(\omega_{1}), F_{\tau}(\omega_{2}))=2,$$ whence
${\rm{dim}}\hbox{ }\G_{\Psi_{\rho}}=2$.
\begin{remark}\label{rmk for pi omega}
One can  prove  ${\rm{dim}}\hbox{ }\G_{\Psi_{\rho}}=2$ without using
Thiery's result, but we do not discuss the details here. We further note that $\left \{ \o_{1},F_{\t}(\o_{1})   \right\}$
and $\left\{ \tilde{\pi},\o_{1} \right\}$ are transcendental bases of $\bar{k}( \omega_{1}, \omega_{2},
F_{\tau}(\omega_{1}), F_{\tau}(\omega_{2}))$ over $\bar{k}$.
\end{remark}

\begin{lemma}\label{rest of scalars} Let $\rho$, $M_{\rho}$ and $\Psi_{\rho}$ be defined as above.
Suppose that the CM field of $\rho$ is separable over $k$, then the motivic Galois group $\G_{\Psi_{\rho}}$ is a non-split torus over $\FF_{q}(t)$.
\end{lemma}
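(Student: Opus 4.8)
We already know from the discussion preceding the lemma that $\dim \G_{\Psi_\rho} = 2$. The first step is to observe that $\G_{\Psi_\rho}$, being a $2$-dimensional smooth connected (absolutely irreducible) commutative group scheme over $\FF_q(t)$, is a torus once we show it is commutative and contains no unipotent part. Commutativity is automatic: $\rho$ has complex multiplication, so $\End_{\mathcal T}(M_\rho)$ contains (the $t$-motivic avatar of) the CM field $K$, which is a quadratic extension of $k$; separability of $K/k$ is what makes $K \otimes_k \FF_q(t)$ (really the relevant coefficient field $\FF_q(t)$-algebra, obtained by base change along $\th \mapsto t$) a \emph{field} $\mathcal K$, a separable quadratic extension of $\FF_q(t)$, rather than a ring with nilpotents. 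The faithful representation $\varphi: \G_{\Psi_\rho} \hookrightarrow \GL(M_\rho^B)$ then factors through $\Cent_{\GL_2}$ of the image of $\mathcal K$, i.e. $\G_{\Psi_\rho}$ is a closed subgroup of $\Res_{\mathcal K/\FF_q(t)}\GG_m$, which is a $2$-dimensional torus. Since $\dim \G_{\Psi_\rho} = 2$ and $\Res_{\mathcal K/\FF_q(t)}\GG_m$ is connected of dimension $2$, and $\G_{\Psi_\rho}$ is smooth, we get $\G_{\Psi_\rho} = \Res_{\mathcal K/\FF_q(t)}\GG_m$, hence a torus.

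For this to be rigorous I need: (i) the natural action of $\End_{\mathcal T}(M_\rho)$ on $M_\rho^B$ realizes $\mathcal K$ as a quadratic field inside $\End_{\FF_q(t)}(M_\rho^B) \cong \Mat_2(\FF_q(t))$; here separability of $K/k$ is the essential input — it guarantees $\mathcal K$ is a field, so $M_\rho^B$ is a $1$-dimensional $\mathcal K$-vector space and $\Cent_{\GL(M_\rho^B)}(\mathcal K) = \mathcal K^\times = (\Res_{\mathcal K/\FF_q(t)}\GG_m)(\FF_q(t))$; (ii) the image of $\varphi$ genuinely commutes with $\mathcal K$, which follows because elements of $\G_{\Psi_\rho}$ act by $\mathcal T$-automorphisms compatible with the Tannakian structure and hence commute with the endomorphism action. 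Then $\G_{\Psi_\rho}$ is a closed subgroup scheme of the torus $\Res_{\mathcal K/\FF_q(t)}\GG_m$; being smooth, connected (Theorem~\ref{T:GalThy}(2)), and of full dimension $2$, it is the whole torus.

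It remains to see the torus is \emph{non-split}. A split $2$-torus over $\FF_q(t)$ would make $M_\rho^B$ decompose as a direct sum of two $\G_{\Psi_\rho}$-stable lines over $\FF_q(t)$, equivalently $M_\rho$ would be isogenous to a sum of two rank-$1$ $t$-motives, forcing $\rho$ to be isogenous over $\bar k$ to a product — impossible, since $\rho$ is a rank $2$ Drinfeld module, which is simple as a $t$-module. Concretely: $\Res_{\mathcal K/\FF_q(t)}\GG_m$ is split if and only if $\mathcal K \cong \FF_q(t) \times \FF_q(t)$, i.e. if $K \otimes_k k$-with-$\th\mapsto t$ splits, which contradicts $\mathcal K/\FF_q(t)$ being a (separable, quadratic) field extension. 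So I must check that the $\FF_q(t)$-algebra arising from $K$ by the twist $\th \mapsto t$ is indeed a field: this is where I use that $K = k(\sqrt{d})$ (or a more general separable quadratic extension) with $d$ a non-square in $k$, and that the substitution $\th \mapsto t$ is a field isomorphism $k \xrightarrow{\sim} \FF_q(t)$, hence carries the non-square $d$ to a non-square in $\FF_q(t)$; separability ensures we really are in the $k(\sqrt d)$ (or Artin--Schreier, in characteristic $2$) situation and not an inseparable $x^p = d$ one, which is exactly the case excluded by hypothesis.

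\textbf{Main obstacle.} The technical heart is step (i): carefully transporting $\End(\rho) \subseteq \CC_\infty$ to an action on the Betti realization $M_\rho^B$ over $\FF_q(t)$ via the twist $\th \mapsto t$, and confirming that this action centralizes $\varphi(\G_{\Psi_\rho})$ and that separability of $K/k$ is exactly what is needed to conclude $\mathcal K$ is a field (equivalently, that the relevant $\Phi_\rho$-equivariant endomorphisms form a field rather than $\FF_q(t) \times \FF_q(t)$ or a nonreduced algebra). Once $\mathcal K$ is known to be a separable quadratic field extension of $\FF_q(t)$ acting on the $2$-dimensional space $M_\rho^B$, the identification $\G_{\Psi_\rho} = \Res_{\mathcal K/\FF_q(t)}\GG_m$ and its non-splitness are formal.
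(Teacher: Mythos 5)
Your proposal is correct and follows essentially the same route as the paper: identify $\mathcal{K}=\End_{\mathcal{T}}(M_{\rho})$ with the CM field (a separable quadratic extension of $\FF_q(t)$), embed $\G_{\Psi_{\rho}}$ into the centralizer of $\mathcal{K}$ acting on the two-dimensional Betti realization, which is exactly $\Res_{\mathcal{K}/\FF_q(t)}\GG_m$, and then use $\dim \G_{\Psi_{\rho}}=2$ to conclude equality, non-splitness being immediate since $\mathcal{K}$ is a field and not $\FF_q(t)\times\FF_q(t)$. The paper cites [CP08, Prop.~2.4.3] for the identification of $\mathcal{K}$ with the CM field, which is the step you flagged as the technical heart.
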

\begin{proof}
 Let $\mathcal{K}:={\rm{End}}_{\mathcal{T}}(M_{\rho})$ be the ring consisting of all left
 $\bar{k}(t)[\s,\s^{-1}]$-module endomorphisms of $M_{\rho}$. Since the representation $\varphi:\G_{\Psi_{\rho}}\rightarrow \GL(M_{\rho}^{B})$ is functorial in $M_{\rho}$ (cf. \cite[Thm. 4.5.3]{P08}), for any
$\FF_{q}(t)$-algebra $R$, one has the
embedding
\begin{equation}\label{embedding into res of scalars}
 \G_{\Psi_{\rho}}(R) \hookrightarrow
\hbox{Cent}_{{\rm{GL}}_{2}(R)}((\mathcal{K}\otimes_{\FF_{q}(t)}R)^{\times})\hbox{ (cf. \cite[p. 11]{CPY09})}.
\end{equation}
One also has the property that $\mathcal{K}$ can be identified with
the CM field of $\rho$ (cf. \cite[Prop. 2.4.3, Remark 2.4.4]{CP08})
and hence $\mathcal{K}$ is a separable quadratic extension of
$\FF_{q}(t)$. Thus, the right hand side of (\ref{embedding into res
of scalars}) consists of the $R$-valued points of the restriction of
scalars ${\rm{Res}}_{\mathcal{K}/ \FF_{q}(t)} ({\GG_{m}}_{/
\mathcal{K}} )$. Since $ {\rm{dim}}\hbox{ }\G_{\Psi_{\rho}}=2$, it follows
that $\G_{\Psi_{\rho}}\cong {\rm{Res}}_{\mathcal{K}/ \FF_{q}(t)
}({\GG_{m}}_{/ \mathcal{K}} )$.
\end{proof}

We close this section with the following proposition which will be
used in the proof of Theorem \ref{thm 2 intro}.

\begin{proposition}\label{prop for F ij}
Let $\rho$, $M_{\rho}$ and $\mathbf{m}$ be defined as above. Given nonzero $f\in {\rm{End}}_{\mathcal{T}}(M_{\rho})$,  let $F=(F_{ij})\in \GL_{2}(\bar{k}(t))$ satisfy
$f(\mathbf{m})=F \mathbf{m}$. Then we have:
\begin{enumerate}
\item All the entries $F_{ij}$ are regular at $t=\th,\th^{q},\th^{q^{2}},\ldots$.

\item $F_{21}(\th)=0$.

\item $F_{11}(\th)$ lies in the CM field of $\rho$.

\end{enumerate}
\end{proposition}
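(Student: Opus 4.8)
The plan is to exploit the rigid analytic trivialization $\Psi_\rho$ and the functional equation it satisfies, together with the specialization formula \eqref{Psi th}. Let $f\in\mathcal{K}:={\rm{End}}_{\mathcal{T}}(M_\rho)$ be nonzero, and write $f(\mathbf{m})=F\mathbf{m}$ with $F\in\GL_2(\bar k(t))$. Since $f$ commutes with the $\s$-action and $\s\mathbf{m}=\Phi_\rho\mathbf{m}$, we get the fundamental relation $F^{(-1)}\Phi_\rho=\Phi_\rho F$. Setting $\widetilde F:=\Psi_\rho^{-1}F\Psi_\rho$, a direct computation using $\Psi_\rho^{(-1)}=\Phi_\rho\Psi_\rho$ shows that $\widetilde F^{(-1)}=\widetilde F$, so every entry of $\widetilde F$ lies in $\FF_q(t)$; in other words $\Psi_\rho^{-1}F\Psi_\rho\in\Mat_2(\FF_q(t))$. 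This is the algebraic engine behind all three assertions.

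For (1): both $\Psi_\rho$ and $\Psi_\rho^{-1}$ lie in $\Mat_2(\EE)\cap\GL_2(\TT)$ (they are entire, and $\det\Psi_\rho$ is a unit times a power of $\Omega$, whose only zeros are at $\th^{q},\th^{q^2},\dots$ — at $t=\th$ it is nonzero by the normalization $\Omega(\th)=-1/\tilde\pi$), so $\Psi_\rho$ and $\Psi_\rho^{-1}$ are regular at $t=\th,\th^q,\th^{q^2},\dots$. Actually one must be slightly careful: $\Omega$ vanishes at $\th^{q^i}$ for $i\geq 1$, so $\Psi_\rho^{-1}$ a priori has poles there. The cleaner route is to iterate: from $F=\Psi_\rho\widetilde F\Psi_\rho^{-1}$ and the twisted relation $F=(\Phi_\rho^{-1})(F^{(-1)})(\Phi_\rho)$, one propagates regularity at $\th^{q^i}$ from regularity at $\th^{q^{i-1}}$, using that $\Phi_\rho$ and $\Phi_\rho^{-1}=\frac{1}{t-\th}\left(\begin{smallmatrix}-\kappa^{(-1)}&-1\\t-\th&0\end{smallmatrix}\right)$ have at worst a pole at $t=\th$; since we already know $F$ is regular at $t=\th$ once we establish regularity there, the only possible bad point in the orbit is $t=\th$ itself, and regularity there follows because $\Psi_\rho,\Psi_\rho^{-1}\in\Mat_2(\EE)$ with $\det\Psi_\rho(\th)\neq 0$. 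So the base case of the induction is the substantive point and the rest is bootstrapping along the Frobenius orbit.

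For (2) and (3): specialize $\widetilde F=\Psi_\rho^{-1}F\Psi_\rho$ at $t=\th$, which is legitimate by (1). Transposing and using \eqref{Psi th}, which identifies $(\Psi_\rho^{-1})^{\rm tr}(\th)$ with the period matrix $\left(\begin{smallmatrix}\o_1&\o_2\\-F_\t(\o_1)&-F_\t(\o_2)\end{smallmatrix}\right)$, one reads off that the column vector $(\o_1,-F_\t(\o_1))^{\rm tr}$ — which is the period vector of the $t$-module attached to $\rho$ — is an eigenvector-type object for $F(\th)^{\rm tr}$ with eigenvalue in $\FF_q(\th)\subseteq\mathcal{K}$... more precisely $F(\th)^{\rm tr}$ acts on the period vectors through the action of $\mathcal{K}=\End(\rho)$ on $\L_\rho$. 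The transcendence input from Remark \ref{rmk for pi omega} (that $\{\o_1,F_\t(\o_1)\}$ is a transcendence basis, in particular $\o_1,F_\t(\o_1)$ are algebraically independent over $\bar k$, so the relation $F_{21}(\th)\o_1^2+\dots=0$ forces $F_{21}(\th)=0$) pins down $F_{21}(\th)=0$, and then $F_{11}(\th)$ is exactly the scalar by which the corresponding element of $\End(\rho)$ multiplies $\o_1$, hence lies in the CM field of $\rho$.

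The main obstacle I expect is getting the regularity at $t=\th$ in part (1) cleanly: one cannot simply invert $\Psi_\rho$ and multiply, since $\det\Psi_\rho$ may vanish elsewhere on the orbit, so the argument has to be organized either as the Frobenius-orbit induction sketched above or by working with the matrix $\Psi_\rho\widetilde F$ (which is visibly entire) and checking that dividing by $\Psi_\rho$ introduces no pole at $t=\th$ because $\det\Psi_\rho(\th)=\xi^2\Omega(\th)^2(\o_1 F_\t(\o_2)-\o_2 F_\t(\o_1))\neq 0$ by the Legendre relation \eqref{E:Legendre}. Once (1) is secured, parts (2) and (3) are just careful bookkeeping with \eqref{Psi th}, the identification $\mathcal{K}\cong$ CM field from \cite[Prop. 2.4.3]{CP08}, and the algebraic independence statement in Remark \ref{rmk for pi omega}.
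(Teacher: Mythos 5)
Your handling of parts (2) and (3) is essentially the paper's own argument: from $F^{(-1)}\Phi_{\rho}=\Phi_{\rho}F$ and the uniqueness statement (\ref{E:Uniq of Psi}) you get $F\Psi_{\rho}=\Psi_{\rho}\eta$ with $\eta\in\GL_{2}(\FF_{q}(t))$, specialize at $t=\th$ via (\ref{Psi th}), use the CM hypothesis together with Remark \ref{rmk for pi omega} to force $F_{21}(\th)=0$, and then read off $F_{11}(\th)$ as an eigenvalue attached to the image of $f$ in $\End(M_{\rho}^{B})\cong$ CM field. (The paper's orientation, computing the $(2,1)$-entry of $F(\th)=\Psi_{\rho}(\th)\eta(\th)\Psi_{\rho}(\th)^{-1}$, is slightly cleaner than yours because the resulting relation $F_{21}(\th)\sim\o_{1}^{2}/\tilde{\pi}$ involves only $\o_{1},\o_{2},\tilde{\pi}$, whereas your orientation drags in $F_{\t}(\o_{2})$ and needs it rewritten in terms of $1,\o_{1},F_{\t}(\o_{1})$; but this is bookkeeping, not a gap.)

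The genuine gap is in part (1). The paper does not argue along the Frobenius orbit at all: it quotes \cite[p.~146]{P08}, where the denominators of the $F_{ij}$ are shown to lie in $\FF_{q}[t]$, and such denominators cannot vanish at $\th^{q^{i}}$ because $\th$ is transcendental over $\FF_{q}$. Your proposed bootstrap fails at its first step. To pass from regularity at $\th^{q^{i-1}}$ to regularity at $\th^{q^{i}}$ you must evaluate $F^{(-1)}=\Phi_{\rho}F\Phi_{\rho}^{-1}$ at $t=\th^{q^{i-1}}$ (twisting moves the pole question from $\th^{q^{i}}$ down to $\th^{q^{i-1}}$), and for $i=1$ the factor $\Phi_{\rho}^{-1}$ has its simple pole exactly at the evaluation point $t=\th$: if $F$ is regular at $\th$, the residue of $\Phi_{\rho}F\Phi_{\rho}^{-1}$ at $t=\th$ equals $\Phi_{\rho}(\th)F(\th)\left(\begin{smallmatrix}\kappa^{(-1)}&1\\0&0\end{smallmatrix}\right)$, whose $(1,2)$-entry is precisely $F_{21}(\th)$ (all other entries being $\bar{k}$-multiples of it). So regularity at $t=\th$ alone does not propagate to $\th^{q}$; you need $F_{21}(\th)=0$, i.e.\ part (2), to cross the first step, and your claim that the base case is the only substantive point and the rest is bootstrapping is therefore backwards. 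The argument is repairable without the citation by reordering: establish regularity at $t=\th$ as you do (that part is correct, since $\widetilde F\in\Mat_{2}(\FF_{q}(t))$ has no pole at $\th$, $\Psi_{\rho}\in\Mat_{2}(\EE)$, and $\det\Psi_{\rho}(\th)\neq 0$ by the Legendre relation), then prove (2), which uses only the value $F(\th)$, and only then run the induction, using $F_{21}(\th)=0$ to kill the residue at the first step; for $i\geq 2$ the pole of $\Phi_{\rho}^{-1}$ at $\th$ is no longer in play. (Minor point: your displayed formula for $\Phi_{\rho}^{-1}$ has a sign slip, since $\det\Phi_{\rho}=-(t-\th)$; this does not affect the pole analysis.)
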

\begin{proof}
Actually, the denominator of each $F_{ij}$ is shown to be in $\FF_{q}[t]$ (see \cite[p. 146]{P08}), whence the result of (1).

For the proof of (2), we first note that since $f$ is a left $\bar{k}(t)[\s,\s^{-1}]$-module homomorphism, we have $F^{(-1)}\Phi_{\rho}=\Phi_{\rho} F$. Therefore we have the difference equation $(F\Psi_{\rho})^{(-1)}=\Phi_{\rho}(F \Psi_{\rho})$, whence
\begin{equation}\label{E:F eta}
F\Psi_{\rho}=\Psi_{\rho} \eta
\end{equation}
 for some $\eta\in \GL_{2}(\FF_{q}(t))$ because of (\ref{E:Uniq of Psi}). For any $x,y\in \CC_{\infty}$ with $y\neq 0$, we denote by $x\sim y$
if $x/y \in \bar{k}$. By specializing (\ref{E:F eta}) at $t=\th$, we obtain that $F_{21}(\th)\sim \o_{1}^{2}/ \tilde{\pi}$ since $\o_{1}$ and $\o_{2}$ are linearly dependent over $\rm{End}(\rho)$. Since $F_{21}(\th)\in \bar{k}$ and
$\left\{ \tilde{\pi},\o_{1}  \right\}$ is an algebraically independent set over $\bar{k}$ by Remark \ref{rmk for pi omega}, $F_{21}(\th)$ has to be zero.

 The property (3) is a  consequence of (2).  In fact, the matrix $\eta=\Psi_{\rho}^{-1}F\Psi_{\rho}$ is the image of $f$ in ${\rm{End}}(M_{\rho}^{B})=\Mat_{2}(\FF_{q}(t))$ (cf. \cite[$\S 3.2$]{CP08}). By taking the conjugation by $\Psi_{\rho}$, the image of ${\rm{End}}(M_{\rho})$ in
$\Mat_{2}(\FF_{q}(t))$ is a quadratic field extension of $\FF_{q}(t)$ that is isomorphic to the fraction field of ${\rm{End}}(\rho)$. As
$F_{21}(\th)=0$ and $$  \Psi_{\rho}(\th)^{-1} F(\th)^{-1} =\eta(\th)^{-1} \Psi_{\rho}(\th)^{-1},$$
we see that $F_{11}(\th)^{-1}$ is an eigenvalue of the matrix $\eta(\th)^{-1}$ with eigenvector as the first column of $\Psi_{\rho}(\th)^{-1}$, whence we prove (3).

\end{proof}

\section{Periods of third kind for rank 2 Drinfeld modules}

\subsection{Biderivations and extensions of Drinfeld modules}

 Fix any
rank $2$ Drinfeld $\FF_{q}[t]$-module $\rho$ given by
$\rho_{t}:=\th+\kappa \t+ \Delta \t^{2}$ with $\kappa, \Delta\in
\bar{k}$, $\Delta \neq 0$, and let $C$ be the Carlitz module. We
also fix generators $\{ \o_1,\o_2 \}$ of the period lattice
$\L_{\rho}$ over $\FF_{q}[\th]$. We are interested in the two
dimensional $t$-module $\phi$ which is an extension of $\rho$ by $C$
and whose Lie algebra is split, i.e., $\phi$ fits into a short exact
sequence of $\FF_{q}[t]$-modules
$$ 0\rightarrow C\rightarrow \phi \twoheadrightarrow \rho\rightarrow 0    $$
and $\partial \phi_{t}=\th I_{2}$. Let $\Ext^{1}_{0}(\rho,C)$ be the
category of the two dimensional $t$-modules $\phi$ as above, then it
forms a group under Bare sum up to Yoneda equivalence. However,
$\Ext^{1}_{0}(\rho,C)$ can be  described explicitly as follows (see
\cite[$\S2$]{PR03}).

Let ${\rm{Der}}_{0}(\rho,C)$ be the $\FF_q$-vector space consisting
of all $\FF_q$-linear maps $\d:  \FF_{q}[t]\rightarrow
\CC_{\infty}[\t]\t$ satisfying
$$       \d_{ab}=C_{a}\delta_{b}+\delta_{a}\rho_{b} \hbox{ for
}a,b\in \FF_{q}[t]       .$$ Any $\d\in {\rm{Der}}_{0}(\rho,C)$ is
called a $(\rho,C)$-biderivation and it is uniquely determined by
the image $\d_{t}=:h \in \CC_{\infty}[\t]\t $. Hence any element
$h\in \CC_{\infty}[\t]\t$ defines an element $\d_{h}\in
{\rm{Der}}_{0}(\rho,C)$ given by $\d_{h}:t\mapsto h$. Thus, we have
a canonical isomorphism of $\FF_{q}$-vector spaces
$$h\mapsto \d_{h}:\CC_{\infty}[\t]\t \rightarrow {\rm{Der}}_{0}(\rho,C).    $$

An element $\d\in {\rm{Der}}_{0}(\rho,C)$ is called inner if there
exists $U\in \CC_{\infty}[\t]$ so that
$$\d_{a}=\d_{a}^{(U)}:= U \rho_{a}- C_{a}U\hbox{ for all }a\in \FF_{q}[t] .$$
The $\FF_{q}$-vector space of all such inner
$(\rho,C)$-biderivations is denoted by ${\rm{Der_{inn}}}(\rho,C  )
$.

Given any $\d\in {\rm{Der}}_{0}(\rho,C)$, it defines a $t$-module
$\phi\in \Ext^{1}_{0}(\rho,C)$
  in the following way
$$\phi_{t}:=\left(
              \begin{array}{cc}
                \rho_{t} & 0 \\
                \d_{t} & C_{t} \\
              \end{array}
            \right)\in {\rm{Mat}}_{2}(\CC_{\infty}[\t])
   .$$ Conversely, one observes that every $\phi\in \Ext^{1}_{0}(\rho,C)$ defines a
   unique $\d \in {\rm{Der}}_{0}(\rho,C)$. Note that if $\d^{(U)}$ is an inner derivation, then the $t$-module
$\phi$ associated to $\d^{(U)}$ is split. In this case, the matrix
$\gamma:=\left(
                         \begin{array}{cc}
                          1  & 0 \\
                           U & 1 \\
                         \end{array}
                       \right)
 $ provides the splitting:
 $$\gamma^{-1}\phi_{a} \gamma=\left(
                                    \begin{array}{cc}
                                      \rho_{a} & 0 \\
                                      0 & C_{a} \\
                                    \end{array}
                                  \right)
  \hbox{ for all }a\in \FF_{q}[t] .$$ However, every split
  extension in $\Ext^{1}_{0}(\rho,C)$ arises in this way.

 Given two extensions in $\Ext^{1}_{0}(\rho,C)$ which are Yoneda
  equivalent, by the definition of the Yoneda equivalence the
  corresponding elements in ${\rm{Der}}_{0}(\rho,C)$ differ by an inner
  derivation. Since  the (Bare) sum on
  ${\rm{Ext}}^{1}_{0}(\rho,C)$ corresponds to the usual addition on
  ${\rm{Der}}_{0}(\rho,C)$, we see that $\Ext^{1}_{0}(\rho,C)$ is
  isomorphic to ${\rm{Der}}_{0}(\rho,C)/ {\rm{Der_{inn}}}(\rho,C)$
  as $\FF_{q}$-vector spaces and hence the space
     $\Ext_{0}^{1}(\rho,C)$ is in bijection with $\left\{ \alpha \t;\hbox{ }\alpha\in \CC_{\infty}
     \right\}$. Moreover,  $\Ext_{0}^{1}(\rho,C)$ has a $t$-module structure
isomorphic to the rank $2$ Drinfeld $\FF_{q}[t]$-module $\rho$.

\subsection{Periods of third kind for $\rho$}\label{subsec for 3rd
kind}

Given $\alpha\in \bar{k}^{\times}$, we let $\d\in
\hbox{Der}_{0}(\rho,C)$ be defined by $\d_{t}=\a \t$, and let $\phi$
be its corresponding $t$-module given by
$$\phi_{t}:=\left(
              \begin{array}{cc}
                \rho_{t} & 0 \\
                \d_{t} & C_{t} \\
              \end{array}
            \right).
   $$ Then its exponential function can be written as

\begin{equation}\label{exp phi}
    {\rm{exp}}_{\phi}\left(
                         \begin{array}{c}
                           z_{1} \\
                           z_{2} \\
                         \end{array}
                       \right)=\left(
                                 \begin{array}{c}
                                   {\rm{exp}}_{\rho}(z_{1}) \\
                                   {\rm{exp}}_{C}(z_{2})+\mathcal{G}_{\d}(z_{1}) \\
                                 \end{array}
                               \right)\hbox{ (cf. \cite[p.422]{PR03})},
     \end{equation}
where $\mathcal{G}_{\d}$ is an $\FF_{q}$-linear, entire function on
     $\CC_{\infty}$ satisfying the properties:
\begin{enumerate}
\item[$\bullet$] $\mathcal{G}_{\d}(z)\equiv 0\hbox{ }({\rm{mod}}\hbox{ }z^{q});$

\item[$\bullet$] $\mathcal{G}_{\d}(a(\th)z )=C_{a} (\mathcal{G}_{\d}(z))+\d_{a}({\rm{exp}}_{\rho}(z))\hbox{ }\forall a\in
\FF_{q}[t]$.
\end{enumerate}
Thus, $\phi$ is uniformizable and its period lattice is given by
\begin{equation}\label{Lambda phi}
 \L_{\phi}=\FF_{q}[\th]\hbox{-} {\rm{Span}}\left\{ \left(
                       \begin{array}{c}
                         \o_{1} \\
                         \l_{1} \\
                       \end{array}
                     \right), \left(
                       \begin{array}{c}
                         \o_{2} \\
                         \l_{2} \\
                       \end{array}
                     \right), \left(
                       \begin{array}{c}
                         0 \\
                         \tilde{\pi} \\
                       \end{array}
                     \right)
     \right\}
\end{equation} for some $\l_{1},\l_{2}\in \CC_{\infty}$. For any $\left(
                                                \begin{array}{c}
                                                  \omega \\
                                                  \lambda \\
                                                \end{array}
                                              \right)\in \Lambda_{\phi}
     $, we call $\lambda$ a period of third kind for $\rho$
     associated to $\a \t$. In the following subsection, we give a
     detailed proof of the formula for such $\l$ as in Theorem \ref{thm 1
     intro}.

\subsection{Proof of Theorem \ref{thm 1 intro}}

Recall that $\rho_{t}= \th+\kappa \t+ \Delta \t^{2}$ for $\kappa,
\Delta\in \bar{k}$, $\Delta \neq 0$. First, we assume that
$\Delta=1$. Since the restriction of $\s^{-1}$ to $\CC_{\infty}$ is
essentially the same as the action of $\t$ on $\CC_{\infty}$, we
extend the action of $\t$ to $\CC_{\infty}((t))$ by identifying it
with $\s^{-1}$ (cf. \S \ref{sec Frob Twist}). For $j=1,2,$ we define
$$\left(
        \begin{array}{c}
          f_{j}\\
          g_{j} \\
        \end{array}
      \right):=\sum_{i=0}^{\infty} {\rm{exp}}_{\phi}\left( (\partial \phi_{t})^{-i-1} \left(
                       \begin{array}{c}
                         \o_{j} \\
                         \l_{j} \\
                       \end{array}
                     \right) \right)t^{i}\in {\rm{Mat}}_{2\times
                     1}(\CC_{\infty}[[t]]),
   $$ where $\l_{j}$ is given in (\ref{Lambda phi}).   Then using the functional equation $  \exp_{\phi}(\partial \phi_{t}(\mathbf{z}) ) =\phi_{t}\left(  \exp_{\phi}(\mathbf{z}) \right) $  one has
   $$ \phi_{t} \left(
                       \begin{array}{c}
                         f_{j} \\
                         g_{j} \\
                       \end{array}
                     \right)=   \left(
                       \begin{array}{c}
                        \th f_{j}+\kappa f_{j}^{(1)}+f_{j}^{(2)}  \\
                        \a  f_{j}^{(1)}+\th g_{j}+g_{j}^{(1)}  \\
                       \end{array}
                     \right)  =t \left(
                       \begin{array}{c}
                         f_{j} \\
                         g_{j} \\
                       \end{array}
                     \right) . $$
Hence, for $j=1,2,$ one has
$$
    \begin{array}{l}
  \k {f_{j}}^{(1)} + {f_{j}}^{(2)}=(t-\th)f_{j},    \\
  \a  {f_{j}}^{(1)}+{g_{j}}^{(1)}=(t-\th)g_{j}.   \\
    \end{array}
  $$
This leads to the following difference equation
$$ \left(
     \begin{array}{ccc}
      f_{1}  & f_{2} & 0 \\
      f_{1}^{(1)}  & f_{2}^{(1)} & 0 \\
      g_{1}  & g_{2} & \frac{1}{\Omega^{(-1)} } \\
     \end{array}
   \right)^{(1)}=\left(
                   \begin{array}{ccc}
                    0  & 1 & 0 \\
                    (t-\th) &-\k  & 0 \\
                    0  & -\a & (t-\th) \\
                   \end{array}
                 \right)\left(
     \begin{array}{ccc}
      f_{1}  & f_{2} & 0 \\
      f_{1}^{(1)}  & f_{2}^{(1)} & 0 \\
      g_{1}  & g_{2} & \frac{1}{\Omega^{(-1)} } \\
     \end{array}
   \right).
   $$
Put
$$\Phi:=\left(
                   \begin{array}{ccc}
                    \k  & 1 & 0 \\
                    (t-\th) & 0  & 0 \\
                    \a  & 0 & 1 \\
                   \end{array}
                 \right)\hbox{ and }\Psi:=  \left(
     \begin{array}{ccc}
     \Omega {f_{1}}^{(1)}  & \Omega {f_{2}}^{(1)} & 0 \\
     \Omega {f_{1}}^{(2)}  &\Omega {f_{2}}^{(2)} & 0 \\
     \Omega {g_{1}}^{(1)}  &\Omega {g_{2}}^{(1)} & 1 \\
     \end{array}
   \right),$$ then we have $$\Psi^{(-1)}=\Phi \Psi  .$$

For any vector $\mathbf{v}=(v_{1},v_{2})^{tr}\in \CC_{\infty}^{2}$
we put $\| \mathbf{v} \|:=\hbox{max}\{
|v_{1}|_{\infty},|v_{2}|_{\infty} \} $. Then we have $ \| \mathbf{v}
+\mathbf{w} \|\leq \hbox{max}\{  \| \mathbf{v} \|,\| \mathbf{w} \|
\} $ for $\mathbf{v},\mathbf{w}\in \CC_{\infty}^{2}  $. We claim
that for each $j=1,2$,
\begin{enumerate}
\item[$\bullet$] $\left(%
\begin{array}{c}
  f_{j} \\
  g_{j} \\
\end{array}%
\right)$ converges on $|t |_{\infty} < |\th|_{\infty}=q $;

\item[$\bullet$] $\hbox{Res}_{t=\th}f_{j}=-\o_{j}$, $\hbox{Res}_{t=\th} g_{j}=-\l_{j}
$.

\end{enumerate}

 We write
 $$  {\rm{exp}}_{\phi}  \left(
                         \begin{array}{c}
                           z_{1} \\
                           z_{2} \\
                         \end{array}
                       \right)=\sum_{i=0}^{\infty} \a_{i}  \left(
                                                           \begin{array}{c}
                                                             z_{1}^{q^{i}} \\
                                                             z_{2}^{q^{i}} \\
                                                           \end{array}
                                                         \right),
 $$ where $$\a_{0}=\left(
                    \begin{array}{cc}
                      1 & 0 \\
                      0 & 1 \\
                    \end{array}
                  \right) \hbox{ and }\a_{i}\in
                  {\rm{Mat}}_{2}(\CC_{\infty})\hbox{ for all }i\in\NN.
$$
For each $1\leq \ell\leq 2$,  by definition we have
$$\left(%
\begin{array}{c}
  f_{\ell} \\
  g_{\ell} \\
\end{array}%
\right)=\sum_{j=0}^{\infty} \sum_{i=0}^{\infty} \a_{i} \left(
\frac{1}{\th^{ q^{i}(j+1)}} \left(%
\begin{array}{c}
  \o_{\ell}^{q^{i}} \\
  \l_{\ell}^{q^{i}} \\
\end{array}%
\right)  \right)t^{j}
$$
For any $|t|_{\infty}< |\th|_{\infty}$, since $\exp_{\phi}$ is
entire on $\CC_{\infty}^{2}$, there exists a positive integer $N$
sufficiently large so that
$$
\left \|  \sum_{i=0}^{\infty} \a_{i} \left(
\frac{1}{\th^{ q^{i}(j+1)}} \left(%
\begin{array}{c}
  \o_{\ell}^{q^{i}} \\
  \l_{\ell}^{q^{i}} \\
\end{array}%
\right)  \right) \right \| |t|_{\infty}^{j} \leq  \hbox{max}_{0\leq i \leq N} \left\{ \left \| \a_{i} \left(%
\begin{array}{c}
  \o_{\ell}^{q^{i}} \\
  \l_{\ell}^{q^{i}} \\
\end{array}%
\right) \right  \| \frac{1 }{ |\th|_{\infty}^{q^{i}(j+1)} } \right\}
|t|_{\infty}^{j},
$$
which converges to $0$ as $j \rightarrow
\infty$. Hence $\left(%
\begin{array}{c}
  f_{\ell} \\
  g_{\ell} \\
\end{array}%
\right)$ converges on $ |t|_{\infty}< |\th|_{\infty}$ for
$\ell=1,2$.

For the second part of the above claim, it suffices to prove that
for each $1\leq \ell\leq 2$,

$$ \left(%
\begin{array}{c}
  f_{\ell} \\
  g_{\ell} \\
\end{array}%
\right)-\left(%
\begin{array}{c}
  -\o_{\ell}/ (t-\th) \\
  -\l_{\ell}/ (t-\th) \\
\end{array}%
\right)=\left(%
\begin{array}{c}
  f_{\ell} \\
  g_{\ell} \\
\end{array}%
\right)-\sum_{j=0}^{\infty} \frac{1}{\th^{j+1} } \left(%
\begin{array}{c}
  \o_{\ell} \\
  \l_{\ell} \\
\end{array}%
\right)t^{j}=  \sum_{j=0}^{\infty} \sum_{i=1}^{\infty} \a_{i} \left(
\frac{1}{\th^{ q^{i}(j+1)}} \left(%
\begin{array}{c}
  \o_{\ell}^{q^{i}} \\
  \l_{\ell}^{q^{i}} \\
\end{array}%
\right)  \right)t^{j} $$ converges at $t=\th$. The proof follows
from the argument of approximation as above.

For each $j=1,2$,  note that by (\ref{exp phi}) we have
$f_{j}=f_{\o_{j}}$, where $f_{\o_{j}}$ is the Anderson generating
function of $\o_{j}$ (cf. (\ref{E:AndGF})). Since
$f_{j}^{(1)}(\th)=F_{\t}(\o_{j})$ for $j=1,2,$ (cf. (\ref{f u (1)
theta})),
 we have
$$ \Psi(\th)=\left(
               \begin{array}{ccc}
                -F_{\t}(\o_{1})/ \tilde{\pi}  &  -F_{\t}(\o_{2})/ \tilde{\pi}  & 0 \\
               (\o_{1}+\k F_{\t}(\o_{1}) )/ \tilde{\pi}   & (\o_{2}+\k F_{\t}(\o_{2}))/ \tilde{\pi}   & 0 \\
               (\l_{1}+\alpha F_{\t}(\o_{1}))/\tilde{\pi}   & (\l_{2}+\alpha F_{\t}(\o_{2}))/ \tilde{\pi} & 1 \\
               \end{array}
             \right).
    $$

 Let $\xi$ be given as in (\ref{E:Legendre}). Define
$$A:= \left(
        \begin{array}{ccc}
         \xi  & 0 & 0 \\
        -\xi \kappa   & -\xi & 0 \\
         0  & 0 & 1 \\
        \end{array}
      \right)
   \hbox{ and }\mathbf{\Phi}:=\left(
                          \begin{array}{ccc}
                            0 & 1 & 0 \\
                            (t-\th) & -\kappa^{(-1)} & 0 \\
                            \a/\xi & 0 &  1\\
                          \end{array}
                        \right), $$ then one has
\begin{equation}\label{relation for A sigma A}
   A^{(-1)} \Phi= \mathbf{\Phi} A  .
\end{equation}
Let $f_{u}$ be the Anderson generating function of $u$ (cf.
(\ref{E:AndGF})), we put  $$\left( G_{1},G_{2}\right):=
\left(-(t-\th)f_{u}-\a/\xi,-f_{u}^{(1)}  \right) \left(
                                                                                            \begin{array}{cc}
                                                                                           -\xi \Omega f_{2}^{(1)}     & \xi \Omega f_{1}^{(1)} \\
                                                                                            \xi (t-\th)\Omega f_{2}    &-\xi (t-\th) \Omega f_{1}   \\
                                                                                            \end{array}
                                                                                          \right).
     $$ and define
     $$  \mathbf{\Psi}:=\left(
                   \begin{array}{ccc}
                    -\xi \Omega f_{2}^{(1)}  & \xi \Omega f_{1}^{(1)} & 0 \\
                    \xi (t-\th)\Omega f_{2} & -\xi (t-\th) \Omega f_{1} &  0\\
                    G_{1}  & G_{2} & 1 \\
                   \end{array}
                 \right), $$
then using (\ref{E:fu1}) one has $$\mathbf{\Psi}^{(-1)} =
\mathbf{\Phi} \mathbf{\Psi} . $$ Since $\mathbf{\Psi}\in
\GL_{3}(\TT)$ (cf. $\S$\ref{sec for rank 2 Drin mod}) and $\det
\Psi=\frac{-1}{\xi^{2}}
 \det \mathbf{\Psi}$, by \cite[Prop. 3.1.3]{ABP04} both of
 $\mathbf{\Psi}$  and $\Psi$ are in $\GL_{3}(\TT)\cap
 \Mat_{3}(\mathbb{E})$. Using (\ref{relation for
A sigma A}) we have $ ( \mathbf{\Psi}^{-1}A \Psi )^{(-1)}=
\mathbf{\Psi}^{-1}A \Psi $. As $\mathbf{\Psi}^{-1}A\Psi\in
\GL_{3}(\TT)$, we see that
\begin{equation}\label{relation for Psi and bf Psi}
A \Psi=  \mathbf{\Psi} \gamma
\end{equation}
for some $\gamma \in {\rm{GL}}_{3}(\FF_{q}[t]) $, because the
subring of $\TT$ fixed by $\s$ is $\FF_{q}[t]$. According to (\ref{f
u (1) theta}), by specializing (\ref{relation for Psi and bf Psi})
at $t=\th$ it follows that $\gamma$ is of the form
$$\gamma=\left(
           \begin{array}{ccc}
             0 &-1  & 0\\
            1  & 0 & 0 \\
            a  & b & 1 \\
           \end{array}
         \right)
  $$ for some $a,b\in \FF_{q}[t]$, whence
\begin{equation}\label{formulae for u}
    \begin{array}{c}
     \l_{1}= -\xi \left( u F_{\t}(\o_{1}) - \o_{1}F_{\t}(u)\right) + a(\th)\tilde{\pi}    \\
       \l_{2}=-\xi \left( u F_{\t}(\o_{2}) - \o_{2}F_{\t}(u) \right) + b(\th)\tilde{\pi}    . \\
    \end{array}
\end{equation}

For the general case that $\Delta\neq 1$, we let $\epsilon\in \bar{k}^{\times}$ be a $(q^{2}-1)$st root of $1/ \Delta$ satisfying $\epsilon^{q+1}/ \xi= 1/\sqrt[q-1]{-\Delta}$. We define $\nu$ to be the
rank $2$ Drinfeld $\FF_{q}[t]$-module given by
$\nu_{t}:=\epsilon^{-1} \rho_{t}
 \epsilon$, whose leading coefficient in $\t$ is $1$. Then we have
\begin{equation}\label{exp nu F nu}
 {\rm{exp}}_{\nu}=\epsilon^{-1}\circ
{\rm{exp}}_{\rho}\circ \epsilon, \hbox{  }
\mathcal{F}_{\t}=\epsilon^{-q}\circ F_{\t} \circ \epsilon,
\end{equation}where $F_{\t}$ (resp. $\mathcal{F}_{\t}$) is the quasi-periodic
function of $\rho$ (resp. $\nu$) associated to $\t$. We define
$\varphi$ to be the two dimensional $t$-module given by
$$ \varphi_{t}:= \left(
                       \begin{array}{cc}
                         \epsilon^{-1} & 0 \\
                         0 & 1 \\
                       \end{array}
                     \right) \left(
                               \begin{array}{cc}
                                 \rho_{t} & 0 \\
                                 \alpha \t & C_{t} \\
                               \end{array}
                             \right)\left(
                                      \begin{array}{cc}
                                        \epsilon & 0 \\
                                        0 & 1 \\
                                      \end{array}
                                    \right)=\left(
                                              \begin{array}{cc}
                                                \nu_{t} & 0 \\
                                                \alpha \epsilon^{q}\t & C_{t} \\
                                              \end{array}
                                            \right),
    $$ then we have
\begin{equation}\label{exp varphi}
    {\rm{exp}}_{\varphi}=\left(
                                     \begin{array}{cc}
                                       \epsilon^{-1} & 0 \\
                                       0 & 1 \\
                                     \end{array}
                                   \right)\circ {\rm{exp}}_{\phi} \circ\left(
                                                             \begin{array}{cc}
                                                               \epsilon & 0 \\
                                                               0 & 1 \\
                                                             \end{array}
                                                           \right)
      .
\end{equation}

Put $\bar{\o}_{i}:=\o_{i}/ \epsilon$ for $i=1,2,$ and $\bar{u}:=u/
\epsilon$. Let $\L_{\nu}$ be the period lattice of $\nu$, then we
have
       $\L_{\nu}:=\FF_{q}[\th]\hbox{-Span}\left\{\bar{\o}_1,\bar{\o}_2
      \right\}$ and  $\exp_{\nu}(\bar{u})= \a \epsilon^{q}/ \xi
      $. Since the leading coefficient of $\nu_{t}$ is $1$,  by (\ref{formulae for
      u}) and (\ref{Lambda phi}) the period lattice $\L_{\varphi}:=\hbox{Ker exp}_{\varphi}$ is the
      $\FF_{q}[\th]$-module generated by
      $$ \left\{ \left(
                   \begin{array}{c}
                     \bar{\o}_1 \\
                     \bar{\l}_1 \\
                   \end{array}
                 \right),\left(
                           \begin{array}{c}
                             \bar{\o}_2 \\
                             \bar{\l}_2 \\
                           \end{array}
                         \right),\left(
                                   \begin{array}{c}
                                     0 \\
                                     \tilde{\pi} \\
                                   \end{array}
                                 \right)
           \right\} ,$$ where $$ \bar{\l}_i=-\xi\left(\bar{u} \mathcal{F}_{\t}( \bar{\o}_i )-\bar{\o}_i \mathcal{F}_{\t}(\bar{u})\right) \hbox{ for }i=1,2.    $$
Now, given any $\left(
                  \begin{array}{c}
                    \o \\
                    \l \\
                  \end{array}
                \right)\in \hbox{Ker exp}_{\phi}
$, by (\ref{exp varphi}) we see that $\left(
                                        \begin{array}{c}
                                          \o/ \xi \\
                                          \l \\
                                        \end{array}
                                      \right)\in \L_{\varphi}
$. Thus, we derive that $\l=-\xi \epsilon^{-q-1}\left(u
F_{\t}(\o)-\o F_{\t}(u) \right)+f(\th)\tilde{\pi} $ for some $f\in
\FF_{q}[t]$ by using $\mathcal{F}_{\t}=\epsilon^{-q}\circ F_{\t}
\circ \epsilon$ and the property that $F_{\t}|_{\L_{\rho}}:
\L_{\rho}\rightarrow \CC_{\infty}$ is $\FF_{q}[\th]$-linear.

\section{Algebraic independence of Drinfeld logarithms}

\subsection{The setting and the reduction of Theorem \ref{thm 2 intro}}\label{sec main thm}

 In this subsection, we fix a rank $2$ Drinfeld
$\FF_{q}[t]$-module $\rho$  given by $\rho_{t}:=\th+\kappa
\tau+\tau^{2}$, with $\kappa\in \bar{k}$, and assume that $\rho$ has
complex multiplication.

 Let $\Phi_{\rho}$ be defined in
(\ref{def of Phi rho}) and $\Psi_{\rho}$ be defined in
(\ref{def of Psi rho}). Let $M_{\rho}$ be the $t$-motive
associated to $\rho$ defined by $\Phi_{\rho}$ (cf. $\S$\ref{sec for rank 2
Drin mod}) and let $\G_{\Psi_{\rho}}$ be its motivic Galois
group. Given $u_{1},\dots,u_{n}\in \CC_{\infty}$ with
$\exp_{\rho}(u_{i}) =: \alpha_{i}\in \bar{k}$ for $i=1, \dots, n$.
 For each $1\leq i \leq n$, we let $f_{u_{i}}$ be the Anderson
generating function of $u_i$ (cf. (\ref{E:AndGF})) and put
$$
\mathbf{g}_{i}:=\left(
                  \begin{array}{c}
                    g_{i1} \\
                    g_{i2} \\
                  \end{array}
                \right):=\left(
                           \begin{array}{c}
                           -\kappa f_{u_{i}}^{(1)}-f_{u_{i}}^{(2)}   \\
                            -f_{u_{i}}^{(1)}  \\
                           \end{array}
                         \right)
\hbox{ and } \mathbf{h}_{i}:=\left(
                               \begin{array}{c}
                                 \a_i \\
                                 0 \\
                               \end{array}
                             \right).
$$
We define
$$
\Phi_{i}:=\left(
            \begin{array}{ccc}
\Phi_{\rho} &  \mathbf{0} \\
\mathbf{h}_{i}^{\mathrm{tr}} & 1  \\
            \end{array}
          \right)\in\hbox{Mat}_{3}(\bar{k}[t])\cap \GL_{3}(\bar{k}(t)),\hbox{ } \Psi_{i}:=
          \left(
            \begin{array}{ccc}
\Psi_{\rho} &  \mathbf{0} \\
\mathbf{g}_{i}^{\mathrm{tr}}\Psi_{\rho} & 1  \\
            \end{array}
          \right)
\in {\rm{GL}}_{3}(\mathbb{T}).
$$
Note that by \cite[Prop. 3.1.3]{ABP04} we have $\Psi_{i}\in
\Mat_{3}(\EE)$ for each $1\leq i\leq n$. Furthermore, we have that
$\Psi_{i}^{(-1)}=\Phi_{i}\Psi_{i} $ and
\begin{equation}\label{g i theta}
 \mathbf{g}_{i}(\th)=\left(%
\begin{array}{c}
  u_{i} - \alpha_{i}\\
  -F_{\t}(u_{i}) \\
\end{array}%
\right) .
\end{equation}

 For each $1\leq i \leq n $, we note that $\Phi_i$
defines a $t$-motive $M_{i}$ with rigid analytic trivialization
$\Psi_{i}$ (cf. \cite[Prop. 4.3.1]{CP08}). Moreover, $M_{i}$ is an extension of the trivial $t$-motive $\mathbf{1}$
by $M_{\rho}$, whence $M_{i}\in \hbox{Ext}_{\mathcal{T}}^{1}(\mathbf{1},M_{\rho})$. Finally we define $M$ to be the $t$-motive
as direct sum $ M:=M_{[n]}:=\oplus_{i=1}^{n}M_{i}$,
whose defining matrix is given by block diagonal matrix $\Phi:=\oplus_{i=1}^{n} \Phi_{i}\in \Mat_{3n}(\bar{k}[t])\cap \GL_{3n}(\bar{k}(t))$ with rigid analytic trivialization
$\Psi:=\oplus_{i=1}^{n}\Psi_{i}\in \Mat_{3n}(\EE)\cap \GL_{3n}(\mathbb{T})$.

% Since $M_{\rho}$ is a sub-$t$-motive diagonally embedded in $M$, we have the following exact sequence of algebraic
% group schemes over $\FF_{q}(t)$:
% $$1\rightarrow V \rightarrow \G_{\Psi}\twoheadrightarrow \G_{\Psi_{\rho}}\rightarrow 1  .$$ Since the surjective map
% $\G_{M}\twoheadrightarrow \G_{M_{\rho}}$ is the canonical projection map, its kernel $V$ has a natural additive group % structure inside the $2n$-dimensional additive group
% $$ \left\{ \left(
%             \begin{array}{ccc}
%               1 & 0 & 0 \\
%               0 & 1 & 0 \\
%               * & * & 1 \\
%             \end{array}
%           \right)\oplus \ldots \oplus \left(
%                                         \begin{array}{ccc}
%                                           1 & 0 & 0 \\
%                                           0 & 1 & 0 \\
%                                           * & * & 1 \\
%                                         \end{array}
%                                       \right)\in \GL_{3n}
%      \right\}      .$$

Note that by (\ref{Psi th}) and (\ref{g i theta}), we have that
$$ \bar{k}(\Psi(\th))=\bar{k}( \o_1,\o_2,F_{\t}(\o_1),F_{\t}(\o_2), u_1,\ldots,u_n,F_{\t}(u_1),\ldots,F_{\t}(u_{n})   )     .$$ Thus, by Theorem \ref{T:GalThy} we have the equality
$$ \hbox{dim }\G_{\Psi}=\hbox{tr.deg}_{\bar{k}}\hbox{ } \bar{k}( \o_1,\o_2,F_{\t}(\o_1),F_{\t}(\o_2), u_1,\ldots,u_n,F_{\t}(u_1),\ldots,F_{\t}(u_{n})   )       .$$

  Note that for any $\beta\in \End(\rho)$ and $u\in \CC_{\infty}$ with
$\exp_{\rho}(u)\in \bar{k}$, by \cite[(3.13)]{BP02} we have
$$ F_{\t}(\beta u)\in \bar{k}\hbox{-Span}\left\{ 1,u,F_{\t}(u) \right\}.   $$
Thus, using (\ref{exp nu F nu}) we see that Theorem \ref{thm 2
intro} is a consequence of the following theorem, which will be proved in $\S \ref{proof of main thm sec 4}$.
\begin{theorem}\label{main thm sec 4}
 Let $\rho$ be a rank $2$ Drinfeld
$\FF_q[t]$-module
  with complex multiplication given by $\rho_{t}=\theta+ \kappa \tau
  +\tau^{2}$, $\kappa \in \bar{k}$. Suppose that the CM field of  $\rho$ is separable over $k$. Let $\o_{1},\o_{2}$ be generators of the period lattice $\L_{\rho}$ of $\rho$ over $\FF_{q}[\th]$.  Suppose that $u_{1}, \dots, u_{n}\in
  \CC_{\infty}$ satisfy $\exp_{\rho}(u_{i}) =: \alpha_{i}\in
  \bar{k}$ for $i=1, \dots, n$ and that $\omega_{1},
  u_{1}, \dots, u_{n}$ are linearly independent over
  the CM field of $\rho$. Let $\Psi$ be defined as above.   Then we have ${\rm{dim}}\hbox{ }\G_{\Psi}=2+2n$. In particular, the $2+2n$
  quantities
\[
\omega_{1}, u_{1}, \dots, u_{n}, F_{\tau}(\omega_{1}),
F_{\tau}(u_{1}), \dots, F_{\tau}(u_{n})
\]
are algebraically independent over $\bar{k}$.
\end{theorem}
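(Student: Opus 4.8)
The plan is to compute $\dim\G_{\Psi}$ by realizing $\G_{\Psi}$ as an extension of the torus $\G_{\Psi_{\rho}}$ by a unipotent radical, and then determining the dimension of the radical from the $\mathcal{K}$-linear independence of the extension classes of $M_{1},\dots,M_{n}$, where $\mathcal{K}$ is the CM field of $\rho$. By Theorem~\ref{T:GalThy}(4) one has $\dim\G_{\Psi}=\trdeg_{\bar{k}}\bar{k}(\Psi(\th))$, and, as recorded just above, $\bar{k}(\Psi(\th))$ is generated over $\bar{k}$ by $\o_{1},\o_{2},F_{\t}(\o_{1}),F_{\t}(\o_{2}),u_{1},\dots,u_{n},F_{\t}(u_{1}),\dots,F_{\t}(u_{n})$; since $\o_{2}$ and $F_{\t}(\o_{2})$ are algebraic over $\bar{k}(\o_{1},F_{\t}(\o_{1}))$ by Remark~\ref{rmk for pi omega}, the equality $\dim\G_{\Psi}=2+2n$ will immediately give the algebraic independence of the $2+2n$ listed quantities. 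So the whole task reduces to proving $\dim\G_{\Psi}=2+2n$.

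First I would set up the extension structure of $\G_{\Psi}$. Each $M_{i}$ is an extension of $\mathbf{1}$ by $M_{\rho}$, so the submotive $M_{\rho}$ of $M=\oplus_{i}M_{i}$ generates a Tannakian subcategory and there is a surjection $\pi\colon\G_{\Psi}\twoheadrightarrow\G_{\Psi_{\rho}}$. Filtering $M$ by sub-$t$-motives whose successive quotients are copies of $\mathbf{1}$ and of $M_{\rho}$ shows that $R_{u}:=\ker\pi$ acts trivially on each graded piece, so $R_{u}$ is unipotent and $\G_{\Psi_{\rho}}$ is the maximal reductive quotient of $\G_{\Psi}$, with unipotent radical $R_{u}$. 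By Lemma~\ref{rest of scalars} — this is where the separability of the CM field enters — $\G_{\Psi_{\rho}}\cong\Res_{\mathcal{K}/\FF_{q}(t)}({\GG_{m}}_{/\mathcal{K}})$ is a non-split torus of dimension $2$. Thus it remains to show $\dim R_{u}=2n$.

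Next I would pin down $R_{u}$ concretely. Reading off the block-triangular shape of $\Phi=\oplus_{i}\Phi_{i}$ and $\Psi=\oplus_{i}\Psi_{i}$, conjugation by $\G_{\Psi_{\rho}}$ exhibits $R_{u}$ as a $\G_{\Psi_{\rho}}$-stable $\FF_{q}(t)$-subspace $W$ of $\Hom(\mathbf{1}^{B},M_{\rho}^{B})^{\oplus n}\cong(M_{\rho}^{B})^{\oplus n}$. Now $M_{\rho}^{B}$ is two-dimensional over $\FF_{q}(t)$ but one-dimensional over $\mathcal{K}$, with $\G_{\Psi_{\rho}}$ acting through the tautological character of $\Res_{\mathcal{K}/\FF_{q}(t)}{\GG_{m}}$; hence $\End_{\G_{\Psi_{\rho}}}(M_{\rho}^{B})=\mathcal{K}$, the $\G_{\Psi_{\rho}}$-module $M_{\rho}^{B}$ is simple, and every $\G_{\Psi_{\rho}}$-submodule of $(M_{\rho}^{B})^{\oplus n}$ is a $\mathcal{K}$-subspace. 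Therefore $\dim_{\FF_{q}(t)}W=2\dim_{\mathcal{K}}W$ with $\dim_{\mathcal{K}}W\le n$, and $\dim R_{u}=2n$ is equivalent to $W=(M_{\rho}^{B})^{\oplus n}$.

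Finally I would invoke the dichotomy of Hardouin (\cite[Thm.~4.7]{Pe07}, \cite[Cor.~2.4]{H09}): the subspace $W$ coincides with the $\mathcal{K}$-span of the classes $[M_{1}],\dots,[M_{n}]$ in $\Ext^{1}_{\mathcal{T}}(\mathbf{1},M_{\rho})$, viewed as a module over $\mathcal{K}=\End_{\mathcal{T}}(M_{\rho})$ by push-out; in particular, if $\dim R_{u}<2n$ there are $\beta_{1},\dots,\beta_{n}\in\mathcal{K}$, not all zero, with $\sum_{i}\beta_{i}\cdot[M_{i}]=0$, i.e. a nontrivial $\mathcal{K}$-linear combination of the $M_{i}$ is a split extension. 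So it remains to prove that $[M_{1}],\dots,[M_{n}]$ are $\mathcal{K}$-linearly independent in $\Ext^{1}_{\mathcal{T}}(\mathbf{1},M_{\rho})$ (Theorem~\ref{K independence in Ext}), which is where the hypothesis that $\o_{1},u_{1},\dots,u_{n}$ are linearly independent over $\mathcal{K}$ is used: via the biderivation description of $\Ext^{1}_{0}(\rho,C)$ from \S 3 together with Proposition~\ref{prop for F ij}, a splitting of $\sum_{i}\beta_{i}M_{i}$ is given by an inner biderivation, and specializing the corresponding matrix identity at $t=\th$ and applying the explicit formula of Theorem~\ref{thm 1 intro} converts it into a relation forcing $\o_{1},u_{1},\dots,u_{n}$ to be $\mathcal{K}$-linearly dependent, a contradiction. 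Granting this, $\dim R_{u}=2n$, hence $\dim\G_{\Psi}=2+2n$. I expect the main obstacle to be precisely Theorem~\ref{K independence in Ext}: turning the analytic linear-independence hypothesis into the algebraic statement that no nonzero $\mathcal{K}$-combination of the $M_{i}$ splits, and then correctly applying Hardouin's criterion for the motivic Galois group in this non-split-torus, positive-characteristic setting; identifying $R_{u}$ with the $\mathcal{K}$-span of the extension classes and controlling its $\G_{\Psi_{\rho}}$-module structure is the other technical point, though more routine.
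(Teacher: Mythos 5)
Your overall architecture coincides with the paper's: reduce to $\dim\G_{\Psi}=2+2n$, use Lemma \ref{rest of scalars} to identify the reductive quotient with the two-dimensional non-split torus $\G_{\Psi_{\rho}}$, realize the unipotent part inside $(M_{\rho}^{B})^{\oplus n}$, and apply the Hardouin-type dichotomy so that everything hinges on the $\mathcal{K}$-linear independence of $M_{1},\dots,M_{n}$ in $\Ext^{1}_{\mathcal{T}}(\mathbf{1},M_{\rho})$ (the paper runs this through the pullback motive $N$, an extension of $M_{\rho}^{n}$ by a single $\mathbf{1}$, and verifies simplicity of $M_{\rho}$, complete reducibility, smoothness of the kernel, and the identification $\zeta(G)\cong U^{B}$ with $N/U$ split; your direct description of $R_{u}$ as a $\G_{\Psi_{\rho}}$-stable $\mathcal{K}$-subspace of $(M_{\rho}^{B})^{\oplus n}$ is an acceptable variant of this, modulo the smoothness point you leave implicit).

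The genuine gap is in your proposed proof of the crucial independence statement (Theorem \ref{K independence in Ext}). You propose to obtain it from the biderivation description of $\Ext^{1}_{0}(\rho,C)$ in \S 3 together with the explicit formula of Theorem \ref{thm 1 intro}. But those tools concern extensions of the $t$-module $\rho$ by the Carlitz module $C$ and their periods of third kind, whereas the $M_{i}$ are $t$-motive extensions of $\mathbf{1}$ by $M_{\rho}$ whose period matrices encode the logarithms $u_{i}$ and $F_{\t}(u_{i})$; a splitting of $\sum_{i}{\beta_{i}}_{*}M_{i}$ in $\mathcal{T}$ is the existence of a matrix $\gamma\in\GL_{3}(\bar{k}(t))$ conjugating $\Phi_{X}$ to $\Phi_{\rho}\oplus(1)$, not an inner $(\rho,C)$-biderivation, and Theorem \ref{thm 1 intro} says nothing about such a splitting. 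The paper's argument at this point is a direct one: from $\gamma\Psi_{X}=(\Psi_{\rho}\oplus(1))\delta$ with $\delta\in\GL_{3}(\FF_{q}(t))$, one first shows $a(\th)=A(\th)$ by a pole-propagation argument (otherwise $b\in\bar{k}(t)$ would have infinitely many poles among $\th^{q},\th^{q^{2}},\dots$), and then specializes at $t=\th$, using Proposition \ref{prop for F ij} (regularity of the $F_{ij}$ at $\th,\th^{q},\dots$, $F_{21}(\th)=0$, and $F_{11}(\th)\in K$) to extract a nontrivial $K$-linear relation among $\o_{1},u_{1},\dots,u_{n}$, contradicting the hypothesis. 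None of these steps is supplied or replaced by your sketch, so as written the heart of the proof is missing; either reproduce this specialization argument or give a genuine bridge from motive splittings to the $t$-module/third-kind framework, which the paper does not claim and which would need separate justification.
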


As a consequence, Theorem
\ref{thm 3 intro} can be proved as follows.

\subsection{Proof of Theorem
\ref{thm 3 intro}} Without loss of generality, we let $\o=a\o_1+b
\o_2$ for some $a,b\in \FF_{q}[\th]$ with $b\neq 0$. For any $\o'
\in \L_{\rho}$ and $u\in \CC_{\infty}$, we set $\l(\o',u):=\o'
F_{\t}(u)-u F_{\t}(\o')$. Note that by (\ref{E:Legendre}) and the fact that the restriction of $F_{\t}$ to
$\L_{\rho}$ is $\FF_{q}[\th]$-linear, we have
$$
\begin{array}{rl}
  L & :=\bar{k}\left(
\begin{array}{ccccc}
  \o_1, & \o_2, & u_1, & \ldots, & u_n ,\\
  F_{\t}(\o_1), & F_{\t}(\o_2), & F_{\t}(u_1), & \ldots, & F_{\t}(u_n) \\
\end{array}   \right) \\
   &= \bar{k}\left(
\begin{array}{ccccc}
  \o_1, & \o_2, & \l(\o_1,u_1), & \ldots, & \l(\o_1,u_n), \\
  F_{\t}(\o_1), & F_{\t}(\o_2), & \l(\o_2,u_1), & \ldots, & \l(\o_2,u_n)      \\
\end{array}   \right) \\
   &= \bar{k}\left(
\begin{array}{ccccc}
  \o_1, & \o, & \l(\o_1,u_1), & \ldots, & \l(\o_1,u_n), \\
  F_{\t}(\o_1), & F_{\t}(\o), & \l(\o,u_1), & \ldots, & \l(\o,u_n)      \\
\end{array}
    \right). \\
\end{array}
$$

Let $K$ be the fraction field of $\hbox{End}(\rho)$ and let $r$ be
the $K$-dimension of the vector space over $K$ spanned by $\left\{
\o_1,\o_2,u_1,\ldots,u_n \right\}$. Then we have $r\geq n+1$ since
$\o,u_1,\ldots,u_n$ are linearly independent over
$\hbox{End}(\rho)$. Since we assume $p\neq 2$, $K$ is separable over $k$ if $\rho$ has complex multiplication. Combining Theorem \ref{main thm sec 4} and
\cite[Thm. 1.2.4]{CP08} we have
$$
\hbox{tr.deg}_{\bar{k}}\hbox{ }L=
\begin{cases}
2n+2 & \hbox{if }\rho\hbox{ has compex multiplication}, \\
2r & \hbox{if }\rho\hbox{ has no complex multiplication}.
\end{cases}
$$Note that in the case that $\rho$ has complex multiplication, $\o_1$ is a
$K$-multiple of $\o$ and $F_{\t}(\o_1)$ is a $\bar{k}$-linear
combination of $\{  1,\o,F_{\t}(\o) \}$. Therefore, we are reduced
to considering the case that $\rho$ has no complex multiplication
and $r=n+1$.

Without loss of generality, we suppose that
$\o_1,\o,u_1,\ldots,u_{n-1}$ are linearly independent over $k$. Let
$$
a_1\o_1+a_2\o+c_1u_1+\ldots+c_n u_n=0
$$
 for $a_1,a_2,c_1,\ldots,c_n\in \FF_{q}[\th]$ with $a_1 \neq 0,c_n\neq
0$, whence
\begin{equation}\label{E:lin omega u}
b_1\o_1+b_2\o_2+c_1u_1+\ldots+c_n u_n=0,
\end{equation}
where $b_{1}:=a_{1}+a_{2}a$, $b_{2}:=a_{2}b   $.

Since $F_{\t}$ is $\FF_{q}$-linear, using the difference equation
$F_{\t}(\th z)=\th F_{\t}(z)+{\hbox{exp}_{\rho}(z)}^{q}$, one has
$F_{\t}(c_{j} u_{j})=c_{j} F_{\t}(u_{j})+\beta_{j}$ for some
$\beta_{j}\in \bar{k}$, $j=1,\ldots,n$. Hence using (\ref{E:lin
omega u}) and the analogue of the Legendre relation
(\ref{E:Legendre}) we obtain
$$\l(\o_{2},c_{n} u_{n})=b_{1}\tilde{\pi}/ \xi -c_{1}\l(\o_{2},u_{1})-\cdots- c_{n-1}\l(\o_{2},u_{n-1})+\gamma_{n} \o_{2}\hbox{ for some }\gamma_{n}\in \bar{k}.  $$
Since $\l(\o_2,c_{n} u_{n})=c_{n} \l(\o_2,u_{n})+ \beta_{n} \o_{2}$,
we have
$$
\l(\o_{2},u_{n})= \frac{1}{c_{n}} \left( b_{1}\tilde{\pi}/ \xi -
c_{1}\l(\o_{2},u_{1})-
\cdots-c_{n-1}\l(\o_{2},u_{n-1})+\eta_{n}\o_{2}\right) \hbox{ for
some }\eta_{n} \in \bar{k}.
$$
On the other hand, we also have
$$
\l(\o_{1},u_{n})=\frac{1}{c_{n}} \left(-b_{2}\tilde{\pi}/ \xi -
c_{1}\l(\o_{1},u_{1})-
\cdots-c_{n-1}\l(\o_{1},u_{n-1})+\eta_{n}'\o_{1}\right) \hbox{ for
some }\eta_{n}'\in \bar{k}.
$$
It follows that
$$\l(\o,u_{n})=a\l(\o_{1},u_{n})+b\l(\o_{2},u_{n})=\frac{1}{c_{n}} \left(ba_{1}\tilde{\pi}/\xi-\sum_{i=1}^{n-1}c_{i}\l(\o,u_{i})
+a\eta_{n}'\o_{1}+b \eta_{n} \o_{2}\right).$$ Since $u_{n}\in
k\hbox{-Span}\left\{\o_1,\o_2,u_{1},\ldots,u_{n-1} \right\}$ and
$$F_{\t}(u_{n})\in \bar{k}\hbox{-Span}\left\{
1,F_{\t}(\o_1),F_{\t}(\o_2),F_{\t}(u_{1}),\ldots,F_{\t}(u_{n-1})
\right\},$$  we have that
$$
\begin{array}{rl}
  L & =\bar{k}\left(
\begin{array}{ccccc}
  \o_1, & \o_2, & u_1, & \ldots, & u_{n-1} ,\\
  F_{\t}(\o_1), & F_{\t}(\o_2), & F_{\t}(u_1), & \ldots, & F_{\t}(u_{n-1}) \\
\end{array}   \right) \\
   &= \bar{k}\left(
\begin{array}{ccccc}
  \o_1, & \o_2, & \l(\o_1,u_1), & \ldots, & \l(\o_1,u_{n-1}), \\
  F_{\t}(\o_1), & F_{\t}(\o_2), & \l(\o_2,u_1), & \ldots, & \l(\o_2,u_{n-1})      \\
\end{array}   \right) \\
   &= \bar{k}\left(
\begin{array}{ccccc}
  \o_1, & \o_2, & \l(\o_1,u_1), & \ldots, & \l(\o_1,u_{n-1}), \\
  F_{\t}(\o_1), & \tilde{\pi}, & \l(\o_2,u_1), & \ldots, & \l(\o_2,u_{n-1})      \\
\end{array}
    \right)\\
&= \bar{k}\left(
\begin{array}{ccccc}
  \o_1, & \o, & \l(\o_1,u_1), & \ldots, & \l(\o_1,u_{n-1}), \\
  F_{\t}(\o_1), & \l(\o,u_n) , & \l(\o,u_1), & \ldots, & \l(\o,u_{n-1})      \\
\end{array}
    \right),\\
\end{array}
 $$ where the third equality uses (\ref{E:Legendre}) and the fourth
 equality uses the  assumption $ba_{1}\neq 0$. This proves Theorem \ref{thm 3 intro} since $\hbox{tr.deg}_{\bar{k}}\hbox{
 }L=2(n+1)$.

\subsection{The $\mathcal{K}$-span of $\left\{M_{i} \right\}$ in $ \hbox{Ext}_{\mathcal{T}}^{1}(\mathbf{1},M_{\rho}) $}
We continue with the notations in $\S \ref{sec main thm}$. Put $\mathcal{K}:=\hbox{End}_{\mathcal{T}}(M_{\rho})$ and let $K$ be the CM field of $\rho$. We shall note that $\hbox{Ext}_{\mathcal{T}}^{1}(\mathbf{1},M_{\rho})$ is an additive group under the Bare sum. More precisely, given
$X_{i}\in \hbox{Ext}_{\mathcal{T}}^{1}(\mathbf{1},M_{\rho})$ whose defining matrix is given by
$$\left(
    \begin{array}{cc}
      \Phi_{\rho} & \mathbf{0} \\
      \mathbf{v}_{i} & 1 \\
    \end{array}
  \right)\in \Mat_{3}(\bar{k}[t])\cap \GL_{3}(\bar{k}(t)) \hbox{ for }i=1,2, $$ then with respect to a suitable choice of $\bar{k}(t)$-basis for the Bare sum
  $X_{1}+_{B}X_{2}$ the matrix representing multiplication by $\s$ on $X_{1}+_{B}X_{2}$ is given by
  $$   \left(
    \begin{array}{cc}
      \Phi_{\rho} & \mathbf{0} \\
      \mathbf{v}_{1}+\mathbf{v}_{2} & 1 \\
    \end{array}
  \right)\in \Mat_{3}(\bar{k}[t])\cap\GL_{3}(\bar{k}(t))   .$$

Further, $\hbox{Ext}_{\mathcal{T}}^{1}(\mathbf{1},M_{\rho})$ has a $\mathcal{K}$-module structure as follows. First,
let $\mathbf{m}\in \Mat_{2\times 1}(M_{\rho})$ comprise the $\bar{k}(t)$-basis of $M_{\rho}$ so that $\s \mathbf{m}=\Phi_{\rho}\mathbf{m}$. Given any nonzero element $f\in \mathcal{K}$, we have that $f (\mathbf{m})=F \mathbf{m}$ for some matrix $F\in \GL_{2}(\bar{k}(t))$. For any $X\in \hbox{Ext}_{\mathcal{T}}^{1}(\mathbf{1},M_{\rho})$ whose defining matrix is given by $$ \left(
    \begin{array}{cc}
      \Phi_{\rho} & \mathbf{0} \\
      \mathbf{v} & 1 \\
    \end{array}
  \right)\in \Mat_{3}(\bar{k}[t])\cap\GL_{3}(\bar{k}(t))   ,$$ we define $f_{*}X$ to be the pushout of the maps $f:M_{\rho}\rightarrow M_{\rho}$ and $M_{\rho}\hookrightarrow X$. With respect to a suitable choice of $\bar{k}(t)$-basis for $f_{*}X$, the matrix representing multiplication by $\s$ on $f_{*}X$ is given by $$  \left(
    \begin{array}{cc}
      \Phi_{\rho} & \mathbf{0} \\
      \mathbf{v}F & 1 \\
    \end{array}
  \right)\in \Mat_{3}(\bar{k}[t])\cap\GL_{3}(\bar{k}(t)) .$$

\begin{theorem}\label{K independence in Ext}
Let notations and assumptions be given in Theorem \ref{main thm sec 4}. For each $\a_{i}$, $1\leq i \leq n$,
let $M_{i}\in{\rm{Ext}}_{\mathcal{T}}^{1}(\mathbf{1},M_{\rho})$ be the $t$-motive associated to $\a_{i}$ defined in $\S \ref{sec main thm}$. Then $M_{1},\ldots,M_{n}$ are $\mathcal{K}$-linearly independent in ${\rm{Ext}}_{\mathcal{T}}^{1}(\mathbf{1},M_{\rho})$.

\end{theorem}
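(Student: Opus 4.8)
The plan is to argue by contradiction: suppose there exist $c_1,\dots,c_n\in\mathcal{K}$, not all zero, with $\sum_{i=1}^{n} c_{i*}M_{i}=0$ in $\mathrm{Ext}^{1}_{\mathcal{T}}(\mathbf{1},M_{\rho})$. Translating into matrix language via the description of the $\mathcal{K}$-module structure given above, writing $c_i(\mathbf{m})=F_i\mathbf{m}$ with $F_i\in\GL_2(\bar{k}(t))$ and recalling that $M_i$ has defining bottom row $\mathbf{h}_i^{\mathrm{tr}}$ with $\mathbf{h}_i=(\alpha_i,0)^{\mathrm{tr}}$, the Bare-sum/pushout formulas show that $\sum_i \mathbf{h}_i^{\mathrm{tr}}F_i$ represents a trivial extension. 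A trivial extension of $\mathbf{1}$ by $M_\rho$ is one whose defining matrix $\bigl(\begin{smallmatrix}\Phi_\rho&\mathbf{0}\\ \mathbf{v}&1\end{smallmatrix}\bigr)$ is $\s$-conjugate to the split one, i.e.\ there is a row vector $\mathbf{w}\in\Mat_{1\times 2}(\bar{k}(t))$ with $\mathbf{v}=\mathbf{w}^{(-1)}\Phi_\rho-\mathbf{w}$; equivalently, on the level of rigid analytic trivializations, $\sum_i \mathbf{g}_i^{\mathrm{tr}}\Psi_\rho F_i \Psi_\rho^{-1}$ differs from a vector over $\FF_q(t)$ by a vector of the form $\mathbf{w}-\mathbf{w}\,(\text{something fixed by }\s)$ — concretely, $\sum_i \mathbf{g}_i^{\mathrm{tr}}F_i = \mathbf{w}^{(-1)}\Phi_\rho-\mathbf{w}$ for some $\mathbf{w}\in\Mat_{1\times2}(\bar{k}(t))$, after absorbing the $\FF_q(t)$-ambiguity.

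Next I would specialize this identity at $t=\theta$ (legitimate by Proposition~\ref{prop for F ij}(1), which guarantees the entries of each $F_i$ are regular there, together with the fact that $\mathbf{g}_i$, $\Psi_\rho$ are entire). Using $\mathbf{g}_i(\theta)=(u_i-\alpha_i,-F_\tau(u_i))^{\mathrm{tr}}$ from \eqref{g i theta}, $F_{i,21}(\theta)=0$ and $F_{i,11}(\theta)\in K$ from Proposition~\ref{prop for F ij}(2),(3), and $\Phi_\rho(\theta)=\bigl(\begin{smallmatrix}0&1\\0&-\kappa^{(-1)}(\theta)\end{smallmatrix}\bigr)$, the specialized relation should collapse to a $\bar{k}$-linear relation expressing $\sum_i \beta_i F_\tau(u_i)$ (with $\beta_i=F_{i,22}(\theta)$, say) and $\sum_i \beta_i' u_i$ in terms of $\omega_1$, $F_\tau(\omega_1)$, $\tilde\pi$ and algebraic numbers, where the coefficients $\beta_i,\beta_i'$ are the images in $K$ of the endomorphisms corresponding to $c_i$ under the identification $\mathcal{K}\cong K$. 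The key point is that not all $\beta_i$ vanish, because $c_i\ne 0$ for some $i$ and the $\mathcal{K}$-action on $\mathrm{Ext}^1$ is faithful enough that $F_{i,22}(\theta)\ne 0$ when $c_i\ne 0$; here I would invoke the structure of $\mathcal{K}$ as a field (Lemma~\ref{rest of scalars} and \cite[Prop.~2.4.3]{CP08}).

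From the specialized identity I would extract the claim that $\omega_1,u_1,\dots,u_n$ are $K$-linearly dependent, contradicting the hypothesis of Theorem~\ref{main thm sec 4}. Concretely: the ``$u$-part'' of the specialized relation reads $\sum_i \beta_i' u_i = (\text{explicit element of }\bar{k}\text{-Span}\{1,\omega_1,F_\tau(\omega_1)\})$; but by Remark~\ref{rmk for pi omega}, $\{\omega_1,F_\tau(\omega_1)\}$ (equivalently $\{\tilde\pi,\omega_1\}$) is an algebraically independent set over $\bar{k}$, and one should be able to argue — using that each $\beta_i'\in K$ and $u_i$ is a $\rho$-logarithm of an algebraic point — that such a $\bar{k}$-linear relation forces a genuine $K$-linear relation among $\omega_1,u_1,\dots,u_n$. (This last reduction is cleanest if one tracks carefully that the right-hand side, being built from $F_\tau$ of periods via the Legendre relation \eqref{E:Legendre}, is actually a $K$-multiple of $\omega_1$ plus an algebraic number.)

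\textbf{The main obstacle.} The delicate step is the bookkeeping at $t=\theta$: showing that the vanishing of the $t$-motive extension really does produce a nontrivial $K$-linear relation among $\omega_1, u_1,\dots,u_n$ rather than merely a $\bar{k}$-linear one that could be vacuous. This requires (i) confirming that the coefficient $F_{i,22}(\theta)$ attached to a nonzero $c_i$ is nonzero — which rests on the faithfulness of the $\mathcal{K}$-action on $\mathrm{Ext}^1_{\mathcal{T}}(\mathbf{1},M_\rho)$ and the field structure of $\mathcal{K}\cong K$ — and (ii) using the algebraic independence of $\{\tilde\pi,\omega_1\}$ from Remark~\ref{rmk for pi omega} to separate the ``$\tilde\pi$-component'' (which absorbs the Legendre-relation contributions) from the genuine linear relation among the $u_i$'s. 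Everything else — the Bare-sum and pushout matrix formulas, regularity at $t=\theta$, and the shape of $\Psi_\rho(\theta)$ from \eqref{Psi th} — is routine given the results already assembled in §\ref{sec for rank 2 Drin mod} and §\ref{sec main thm}.
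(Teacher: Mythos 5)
Your overall route is the same as the paper's: translate the hypothetical triviality of ${f_1}_{*}M_1+_B\cdots+_B{f_n}_{*}M_n$ into the equation $(A,B)=\sum_i(\a_i,0)F_i=(a,b)-(a,b)^{(-1)}\Phi_\rho$ for a witness $\mathbf{w}=(a,b)\in\Mat_{1\times 2}(\bar{k}(t))$, specialize at $t=\th$, and use Proposition \ref{prop for F ij} to turn the result into a $K$-linear relation among $\o_1,u_1,\ldots,u_n$ contradicting the hypothesis. But there is a genuine gap precisely at the step you label the ``main obstacle,'' and it is not the one you think. First, your claim that specialization at $t=\th$ is ``legitimate by Proposition \ref{prop for F ij}(1) together with entireness of $\mathbf{g}_i$, $\Psi_\rho$'' is unjustified: the objects needing control are the entries $a,b$ of the triviality witness, which are arbitrary elements of $\bar{k}(t)$ and a priori may have poles at $\th$ (and at the points relevant for $b^{(-1)}$); Proposition \ref{prop for F ij}(1) says nothing about them. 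The paper gets regularity of $a,b$ at $t=\th$ only by comparing rigid analytic trivializations, $\gamma\Psi_X=(\Psi_\rho\oplus(1))\delta$ with $\delta\in\GL_3(\FF_q(t))$, a step absent from your sketch. (Relatedly, your ``equivalent'' identity $\sum_i\mathbf{g}_i^{\mathrm{tr}}F_i=\mathbf{w}^{(-1)}\Phi_\rho-\mathbf{w}$ with $\mathbf{w}$ rational cannot hold as written, since the left side is not rational; the correct analytic statement is $\bigl((a,b)+\sum_i\mathbf{g}_i^{\mathrm{tr}}F_i\bigr)\Psi_\rho=(c,d)\in\Mat_{1\times2}(\FF_q(t))$.)

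Second, even granting regularity, the specialized first-component relation is $a(\th)-A(\th)+\sum_i u_i\,{F_i}_{11}(\th)=c(\th)\o_1+d(\th)\o_2$ (the $F_\tau(u_i)$ terms drop because ${F_i}_{21}(\th)=0$; note the relevant nonvanishing coefficients are the ${F_i}_{11}(\th)\in K$, not ${F_i}_{22}(\th)$). This is only an affine relation with the unknown constant $a(\th)-A(\th)\in\bar{k}$, and no amount of separating $\tilde\pi$- and $\o_1$-components via Remark \ref{rmk for pi omega} removes that constant: $K$-linear independence of $\o_1,u_1,\ldots,u_n$ is simply not contradicted by a relation with a nonzero constant term. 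The paper's decisive step is the claim $a(\th)=A(\th)$, proved by pole propagation from $(t-\th)b^{(-1)}+A=a$ and its once-twisted companion: a pole of $b$ at $\th^{q}$ would force poles at infinitely many points $\th^{q^2},\th^{q^3},\ldots$, impossible for $b\in\bar{k}(t)$, so $b^{(-1)}$ is regular at $\th$ and the constant vanishes. Your proposal never supplies this (or any substitute, such as clearing denominators, applying $\exp_\rho$, and invoking Yu's transcendence of nonzero logarithms of algebraic points to kill the constant), so as it stands the contradiction is not reached.
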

\begin{proof}
 Suppose on the contrary that $X:={f_{1}}_{*}M_{1}+_{B}\cdots+_{B}{f_{n}}_{*}M_{n} $ is trivial in $ {\rm{Ext}}_{\mathcal{T}}^{1}(\mathbf{1},M_{\rho})$ for some $f_{1},\ldots,f_{n}\in \mathcal{K}$, which are not all zero. For each $1\leq i\leq n$ with $f_{i}\neq 0$, let $F_{i}\in \GL_{3}(\bar{k}(t))$ satisfy $f_{i}(\mathbf{m})=F_{i}\mathbf{m}$ and set $(A,B):= \sum_{i=1}^{n} (\a_{i},0)F_{i}$. Then with respect to a suitable choice $\mathbf{x}\in \Mat_{3\times 1}(X)$, which comprises a $\bar{k}(t)$-basis of $X$, the matrix representing multiplication by $\s$ on $X$ is given by
$$ \Phi_{X}:= \left(
     \begin{array}{ccc}
       0 & 1 & 0 \\
       (t-\th) & -\kappa^{(-1)} & 0 \\
       A & B & 1 \\
     \end{array}
   \right)\in\Mat_{3}(\bar{k}[t]) \GL_{3}(\bar{k}(t)).
   $$Set $\mathbf{w}:= \sum_{i=1}^{n} \mathbf{g}^{\hbox{tr}}_{i} F_{i}\Psi_{\rho}.$ Then   $ \Psi_{X}:=\left(
                 \begin{array}{cc}
                   \Psi_{\rho} & 0 \\
                   \mathbf{w} & 1 \\
                 \end{array}
               \right)\in \GL_{3}(\TT)\cap \Mat_{3}(\EE)
               $ is a rigid analytic trivialization for $\Phi_{X}$.

Let $\Phi_{\rho}\oplus (1)\in \Mat_{3}(\bar{k}[t])\cap \GL_{3}(\bar{k}(t))$ be the block diagonal matrix. Since $X$ is trivial in ${\rm{Ext}}_{\mathcal{T}}^{1}(\mathbf{1},M_{\rho})$, there exists
$$\gamma=\left(
           \begin{array}{ccc}
             1 & 0 & 0 \\
             0 & 1 & 0 \\
             a & b & 1 \\
           \end{array}
         \right)\in \GL_{3}(\bar{k}(t))
  $$ so that if we take $\mathbf{x}':= \gamma \mathbf{x}$ as a new $\bar{k}(t)$-basis of $X$, then we have
  $$ \s \mathbf{x}'= \left(\Phi_{\rho}\oplus (1) \right)
   \mathbf{x}'.
   $$ That is,
   \begin{equation}\label{E: gamma Phi X}
   \gamma^{(-1)} \Phi_{X}=\left(\Phi_{\rho}\oplus (1) \right)  \gamma.
   \end{equation}   So we have the difference equation
$$ ( \gamma \Psi_{X} )^{(-1)}=(\Phi_{\rho}\oplus (1))( \gamma \Psi_{X}  ) $$
and hence by (\ref{E:Uniq of Psi}) we have
\begin{equation}\label{E: gamma delta}
 \gamma \Psi_{X}=(\Psi_{\rho}\oplus (1))\delta
\end{equation}
for some $\delta=\left(
                 \begin{array}{ccc}
                   1 & 0 & 0 \\
                   0 & 1 & 0 \\
                   c & d & 1 \\
                 \end{array}
               \right)\in \GL_{3}(\FF_{q}(t)).
$ Note that by (\ref{E: gamma delta}) we have that $a,b$ are regular at $t=\th$.   Moreover, from (\ref{E: gamma delta}) we obtain the following equation
$$ (a,b)\Psi_{\rho}+\sum_{i=1}^{n}\mathbf{g}_{i}^{\rm{tr}} \Psi_{\rho} \eta_{i}=(c,d) ,$$
where $\eta_{i}:= \Psi_{\rho}^{-1}F_{i}\Psi_{\rho} $ is regular at $t=\th$ by Proposition \ref{prop for F ij}. As $(A(\th),B(\th))=\sum_{i=1}^{n} (\a_{i},0)F_{i}(\th)$, by specializing the above equation
at $t=\th$, we have
\begin{equation}\label{E:Linear relation over K}
a(\th)-A(\th)+\sum_{i=1}^{n}( u_{i}{F_{i}}_{11}(\th)-F_{\t}(u_{i}){F_{i}}_{21}(\th) )=c(\th)\o_{1}+d(\th)\o_{2}.
\end{equation}
We claim that $a(\th)=A(\th)$. Then by Proposition \ref{prop for F ij} and the fact that $\o_{1}$ is a $K$-multiple of $\o_{2}$,  (\ref{E:Linear relation over K}) gives a non-trivial $K$-linear relation among $\left\{\o_{1},u_{1},\ldots,u_{n}\right\}$, which contradicts to the assumption.

To prove the claim, we note that (\ref{E: gamma Phi X}) gives rise to the following equation
\begin{equation}\label{E:A a b I}
(t-\th)b^{(-1)}+A=a,\hbox{ }a^{(-1)}-\kappa^{(-1)}b^{(-1)}+B=b,
\end{equation}
which imply
\begin{equation}\label{E:A a b II}
(t-\th^{(-1)}) b^{(-2)}+A^{(-1)}-\kappa^{(-1)}b^{(-1)}+B=b.
\end{equation}
Since $(A,B)=\sum_{i=1}^{n}(\a_{i},0)F_{i}$, by Proposition \ref{prop for F ij} we have that $A$ and $B$ are regular at $t=\th,\th^{q},\th^{2},\cdots$. Suppose that $b^{(-1)}$ has pole at $t=\th$, i.e., $b$ has pole at $t=\th^{q}$. By (\ref{E:A a b II}), we see that either $b^{(-1)}$ or $b^{(-2)}$ has pole at $t=\th^{q}$, i.e., either $b$ has pole at $t=\th^{q^{2}}$ or $t=\th^{q^{3}}$. By repeating the same argument, we see that $b$ has infinitely many poles among
$\left\{\th^{q},\th^{q^{2}},\th^{q^{3}},\ldots    \right\}$, which contradicts the fact $b\in \bar{k}(t)$. So $b^{(-1)}$ is regular at $t=\th$, whence we have $a(\th)=A(\th)$ by (\ref{E:A a b I}).
\end{proof}
\subsection{Proof of Theorem \ref{main thm sec 4}}\label{proof of main thm sec 4}
Let $N$ be the $t$-motive defined by
$$ \Phi_{N}:= \left(
                \begin{array}{cccc}
                  \Phi_{\rho} &  & &  \\
                   & \ddots &  &  \\
                   &  & \Phi_{\rho} &  \\
                \mathbf{h}^{\rm{tr}}_{1} & \cdots & \mathbf{h}^{\rm{tr}}_{n} & 1 \\
                \end{array}
              \right)\in \Mat_{2n+1}(\bar{k}[t])\cap \GL_{2n+1}(\bar{k}(t))
    $$with rigid analytic trivialization
    $$ \Psi_{N}:=\left(
                   \begin{array}{cccc}
                     \Psi_{\rho} &  &  &  \\
                      & \ddots &  &  \\
                      &  & \Psi_{\rho} &  \\
                     \mathbf{g}^{\rm{tr}}_{1}\Psi_{\rho} & \cdots & \mathbf{g}^{\rm{tr}}_{n}\Psi_{\rho} & 1 \\
                   \end{array}
                 \right)\in \Mat_{2n+1}(\mathbb{E})\cap \GL_{2n+1}(\mathbb{T})
       .   $$ Note that $N$ is an extension of $M^{n}_{\rho}$ by $\mathbf{1}$, which is the pullback of $M:=\oplus_{i=1}^{n}M_{i}\twoheadrightarrow \mathbf{1}^{n}$ and the diagonal embedding $ \mathbf{1}\hookrightarrow \mathbf{1}^{n}$. As the two $t$-motives $M$ and $N$ generate the same Tannakian sub-category of $\mathcal{T}$, the motivic Galois groups $\G_{M}$ and $\G_{N}$ are isomorphic and hence our task is to prove $\hbox{dim }\G_{N}=2n+2$.

       Consider the short exact sequence of algebraic group schemes over $\FF_{q}(t)$:
       \begin{equation}\label{S.E.S for Gamma N}
       \xymatrix{
         0\ar[r]& G \ar[r] &\G_{N}\ar@{->>}[r]^{\pi} &\G_{M_{\rho}} \ar[r]& 1,}
       \end{equation}
where the surjective map $\pi: \G_{N}\twoheadrightarrow \G_{M_{\rho}}$ is the canonical projection map (cf. \cite[p.22]{CP08}). Having Theorem \ref{K independence in Ext} at hand, we follow Hardouin's argument (\cite[Thm. 4.7]{Pe07}, \cite[Cor. 2.4]{H09}) to prove $\hbox{dim }G=2n$, whence we prove Theorem \ref{main thm sec 4}.

First, we note that we have the following properties:
\begin{enumerate}
\item[(I)] $M_{\rho}$ is a simple object in $\mathcal{T}$ (cf. \cite[Lem. 3.1.1]{CP08});

\item[(II)] Every $\G_{M_{\rho}}$-module is completely reducible (cf. Lemma \ref{rest of scalars} and \cite{J03}).

\item[(III)] The additive group $G$ is smooth over $\FF_{q}(t)$.

\end{enumerate}
The third property above follows by directly proving that the induced tangent map $d \pi :\hbox{Lie }\G_{N}\rightarrow \hbox{Lie }\G_{M_{\rho}}$ is surjective. The argument is similar to the proof of \cite[Prop. 4.1.2]{CP08}, so we omit the details.

Now, let $\mathbf{n}\in \Mat_{(n+1)\times 1}(N)$ comprise the $\bar{k}(t)$-basis of $N$  so that $\s \mathbf{n}=\Phi_{N}\mathbf{n}$. Note that $\Psi_{N}^{-1}\mathbf{n}$ is a canonical $\FF_{q}(t)$-basis of $N^{B}$. We set $f$ to be the last coordinate of $\Psi_{N}^{-1}\mathbf{n}$. For any $\FF_{q}(t)$-algebra $R$, using (\ref{E:GammaDef}) and Remark \ref{rmk for Galois rep} we consider the following well-defined map
$$
     \begin{array}{rrcl}
       \zeta^{(R)}: & G(R) & \rightarrow & R\otimes_{\FF_{q}(t)}(M_{\rho} ^{n} )^{B}\\
        & g & \mapsto & (g-1)f .\\
     \end{array}
    $$ When we regard $(M_{\rho}^{n})^{B}$ as an additive group scheme over $\FF_{q}(t)$, the map $\zeta$ defined above gives rise a morphism of group schemes over $\FF_{q}(t)$. Moreover, since $\G_{M_{\rho}}$ has a natural action on $G$ coming from (\ref{S.E.S for Gamma N}),  one checks directly
    that $\zeta^{(R)}$ is $\G_{M_{\rho}}(R)$-equivariant. In other words, one has:
    \begin{enumerate}
\item[(IV)]    the image $\zeta(G)$ is a $\G_{M_{\rho}}$-submodule of $(M_{\rho}^{n})^{B}$.
\end{enumerate}
Further, following Hardouin (cf. \cite[Lem. 2.3]{H09}) one uses (I)$\sim$(IV) to prove:
\begin{enumerate}
\item[(V)] $\zeta(G)\cong U^{B}$, where $U$ is a sub-$t$-motive of $M_{\rho}^{n}$ so that $N/U$ is split as
direct sum of $M_{\rho}^{n}/U$ and $\mathbf{1}$.
\end{enumerate}
Hence, to prove $\hbox{dim }G=2n$, it suffices to prove $U^{B}=(M_{\rho}^{n})^{B}$.

Suppose on the contrary that $U^{B}\subsetneq (M_{\rho}^{n})^{B}$. As $U$ is a proper sub-$t$-motive of the
completely reducible $t$-motive $M_{\rho}^{n}$, there exists a non-trivial morphism $\phi\in{\rm{Hom}}_{\mathcal{T}}(M_{\rho}^{n},M_{\rho})$ so that $U\subseteq \hbox{Ker }\phi$. Moreover, the morphism $\phi$ factors through the map $M_{\rho}^{n}/U \rightarrow M_{\rho}^{n}/\hbox{Ker }\phi $ as in the following commutative diagram:
$$
\xymatrix{
M_{\rho}^{n} \ar[dr]^{\phi} \ar[d] & \\
M_{\rho}^{n}/ U \ar [r]& M_{\rho}^{n}/ \hbox{Ker }\phi \cong M_{\rho}.
}
$$
Since $\phi\in{\rm{Hom}}_{\mathcal{T}}(M_{\rho}^{n},M_{\rho})$, we can write
$\phi(m_{1},\ldots,m_{n})= \sum_{i=1}^{n} f_{i}(m_{i})$ for some $f_{1},\ldots,f_{n}\in \mathcal{K}$, not all zero. Then the pushout $\phi_{*}N={f_{1}}_{*}M_{1}+_{B}\cdots+_{B} {f_{n}}_{*}M_{n}$ is a quotient of $N/U$. By (V), it follows that $\phi_{*}N$ is trivial in
${\rm{Ext}}^{1}_{\mathcal{T}}(\mathbf{1},M_{\rho})$. But this contradicts to the $\mathcal{K}$-linear independence of
$M_{1},\ldots,M_{n}$ in $ {\rm{Ext}}^{1}_{\mathcal{T}}(\mathbf{1},M_{\rho}) $.

\begin{remark}\label{rmk final subsec}
In this remark, we shall mention that:
\begin{enumerate}

\item[(i)] One can combine Papanikolas' theory, the ABP criterion (cf. \cite[Thm. 1.3.2]{ABP04}) and Yu's sub-$t$-module theorem
(cf. \cite[Thm. 0.1]{Yu97}, \cite[Prop. 2]{Br01}) to give an alternative proof of Theorem \ref{main thm sec 4}, but we do not discuss the details here.
\item[(ii)] When $p=2$ and the Drinfeld module $\rho'$ is given by ${\rho'}_{t}=\th+ (\sqrt{\th}+\sqrt{\th^{q}})\t+\t^{2}$, then the CM field of $\rho'$ is $\FF_{q}(\sqrt{\th})$ which is inseparable over $k$. Moreover, one finds that the algebraic group $\G_{M_{\rho'}}$ is not a torus; its unipotent radical is nontrivial (see (\ref{embedding into res of scalars})). It is not clear to the author
     whether the argument above still works in this situation since it relies on the property (II), although it is believed that
      the result of Theorem \ref{thm 2 intro} holds in this case. However, we point out that every rank $2$ Drinfeld $\FF_{q}[t]$-module with complex multiplication whose CM field is inseparable over $k$ is isomorphic to $\rho'$, so it is essentially the omitted case when
    there are extra endomorphisms.

\end{enumerate}

\end{remark}

\end{document}